\DeclareMathOperator{\Th}{Th}
\DeclareMathOperator{\Stab}{Stab}
\DeclareMathOperator{\cl}{cl}
\DeclareMathOperator{\tp}{tp}
\DeclareMathOperator{\Aff}{Aff}
\DeclareMathOperator{\SL}{SL}
\DeclareMathOperator{\GL}{GL}
\DeclareMathOperator{\charf}{char}
\newcommand{\Gm}{{\mathbb{G}_m}}
\newcommand{\GmD}{{\mathbb{G}_m^{0}}}
\newcommand{\GmT}{{\mathbb{G}_m^{00}}}
\newcommand{\Ga}{{\mathbb{G}_a}}
\newcommand{\GaT}{{\mathbb{G}_a^{00}}}
\newcommand{\CC}{{\mathfrak{C}}}
\newcommand{\CCC}{{\bar{\mathfrak{C}}}}
\newcommand{\Qp}{{\mathbb{Q}_p}}
\newcommand{\OK}{\mathcal{O}_K}
\newcommand{\OCC}{\mathcal{O}_\CC}
\newcommand{\LL}{\mathcal{L}}
\newcommand{\PP}{\mathbb{P}}
\newcommand{\Sdef}[1]{\bar S(\CC/{#1})}
\newcommand{\Sinv}[1]{S(\CC/{#1})}
\newcommand{\SinvX}[2]{S_{#1}(\CC/{#2})}
\newcommand{\Sfin}[1]{S^{#1}(\CC)}
\newcommand{\Sext}[1]{S_{ext}(#1)}
\newcommand{\SextX}[2]{S_{ext,#1}(#2)}
\newcommand{\QM}[4]{  \begin{pmatrix} {#1} & {#2} \\ {#3} & {#4} \end{pmatrix}}
\newcommand{\QMs}[4]{ \left( \begin{smallmatrix} {#1} & {#2} \\ {#3} & {#4} \end{smallmatrix} \right)}
\newenvironment{innerproof}
{\proof}
{\endproof}
\newtheorem{theorem}{Theorem}
\numberwithin{theorem}{section}
\newtheorem{lemma}[theorem]{Lemma}
\newtheorem{fact}[theorem]{Fact}
\newtheorem{proposition}[theorem]{Proposition}
\newtheorem{corollary}[theorem]{Corollary}
\newtheorem{definition}[theorem]{Definition}
\newtheorem*{claim2}{Claim}
\theoremstyle{definition}
\newtheorem{example}[theorem]{Example}
\newtheorem{remark}[theorem]{Remark}
\title{Topological dynamics and NIP fields}
\author{Grzegorz Jagiella}
\thanks{The author is supported by the National Science Centre, Poland grant no. 2018/31/B/ST1/00357}
\address{Instytut Matematyczny Uniwersytetu Wrocławskiego, pl. Grunwaldzki 2/4, 50-384 Wrocław, Poland}
\address{ORCID: \href{http://orcid.org/0000-0002-5504-5260}{0000-0002-5504-5260}}
\email{grzegorz.jagiella@math.uni.wroc.pl}
\keywords{Definable topological dynamics, Ellis group conjecture}
\subjclass[2010]{03C45, 54H20}
\begin{document}
	\begin{abstract}
		We study definable topological dynamics of some algebraic group actions over an arbitrary NIP field $K$. We show that the Ellis group of the universal definable flow of $\SL_2(K)$ is non-trivial if the multiplicative group of $K$ is not type-definably connected, providing a way to find multiple counterexamples to the Ellis group conjecture, particularly in the case of dp-minimal fields. We also study some structure theory of algebraic groups over $K$ with definable f-generics.
	\end{abstract}
	\maketitle

\section*{Introduction}

In this paper, we consider topological dynamics of some groups linear over a field with NIP. We explain the motivations for this research assuming that the reader is familiar with the basics of \emph{definable topological dynamics}. We refer to \cite{New1}, \cite{New2}, \cite{Pil} and \cite{Krup} as the main sources, although we will recall all the necessary notions in the preliminaries later. A group $G$ definable in a first-order structure $M$ can be studied through its \emph{universal definable flow over} $M$, namely the set of external types $\SextX{G}{M}$ on which $G(M)$ acts in a natural way. This action is a $G(M)$-flow in the classical sense. Definable topological dynamics aims to interpret various classical dynamical notions (such as \emph{almost periodic type}s or the \emph{Ellis group}) in this setup. It is specifically interested in the first-order invariants, that is dynamical properties that do not depend on the choice of the model $M$.

Newelski proved early \cite{New3} that in stable theories, the Ellis group of the flow $S_G(M) = \SextX{G}{M}$ is isomorphic to the quotient $G/G^{00}$, which does not depend on the model. Moreover, the set of almost periodic types of $S_G(M)$ coincides with the set of \emph{generic types} types of $G$. This result was also obtained in the unstable setup of definably compact groups definable in the o-minimal setup. This has led to the conjecture that under some sufficiently ``tame'' conditions (inlcuding at least NIP), the Ellis group of $\SextX{G}{M}$ is isomorphic to $G/G^{00}$. This was proven true by Chernikov and Simon in \cite{CS} for the class of \emph{definably amenable} NIP groups. However, a counterexample was found by Gismatullin, Pillay and Yao in the o-minimal setting \cite{GPP}.

Among the difficulties outside the stable setup is that the almost periodic types no longer coincide with the generic types. Indeed, generic types may not even exist. A number of their generalizations were proposed. An early instance are \emph{weakly generic} types, which form the closure of almost periodic types. The notion of \emph{f-generic} and \emph{strongly f-generic} types is instrumental in the description of dynamics of definably amenable groups. Particular among those are groups with \emph{dfg}, that is definably amenable groups that admit a (strong) global f-generic type definable over some small model.

A substantial study over the Ellis group conjecture and surrounding subjects has been carried out in the context of algebraic groups over particular NIP fields. The original counterexample was the group $\SL_2(\mathbb{R})$, definable in an o-minimal expansion of the reals. The description of the Ellis group was later generalized to groups with compact-torsion-free decomposition \cite{Jag1}, and eventually all groups definable in the reals \cite{Yao1}, \cite{Jag2}. The work in \cite{PPYpadic} initiated similar study for groups definable in $\Qp$, finding $\SL_2(\Qp)$ to be another prototypical example. The study of groups in $\Qp$ also motivated the development of the notion of groups with \emph{dfg}, eventually giving their description in the p-adic case and showing that in such groups, almost periodics coincide with weakly generic types. The work by Kirk in \cite{Kirk} provided another counterexample, the group $\SL_2\left(\mathbb{C}((t))\right)$.

The particular results on $\SL_2$ serve as the motivation for first part of the paper (Sections 1 and 2). We consider the dynamical properties of an arbitrary NIP field $K$ with the final goal of obtaining at least a partial description of the dynamics of $\SL_2(K)$. We state the main result as follows, with $\Gm$ denoting the multiplicative group of the field interpreted in a sufficiently saturated extension:
\begin{theorem}
	\label{thm:counterexample}
	Let $T$ be a NIP theory that interprets a field $K$ with \mbox{$\GmT \neq \Gm$}. Then $T$ interprets a counterexample to the Ellis group conjecture, namely $\SL_2(K)$.
\end{theorem}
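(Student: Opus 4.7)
The argument splits into two tasks: showing that $\SL_2(K)/\SL_2(K)^{00}$ is trivial, and exhibiting a nontrivial Ellis group for the universal $\SL_2(K)$-flow. Once both are in hand, the flow $\SextX{\SL_2(K)}{M}$ witnesses the failure of the Ellis group conjecture.

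For the first task, I would argue that $\SL_2(K)$ is generated by its two root subgroups $U^{\pm}$, each $K$-definably isomorphic to $\Ga(K)=(K,+)$. Because the additive group of a field is divisible, $\GaT=\Ga$ in $\CC$, and so the type-definable normal bounded-index subgroup $\SL_2(K)^{00}$ contains both $U^{\pm}$ and hence equals $\SL_2(K)$. Consequently $\SL_2(K)/\SL_2(K)^{00}$ is trivial, and it suffices to produce a nontrivial element of the Ellis group.

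For the second task, I would exploit the Bruhat decomposition $\SL_2 = B \sqcup BwB$, where $B=TU$ is the upper triangular Borel and $w = \QMs{0}{-1}{1}{0}$ the Weyl involution. The big cell $BwB = UTwU$ is definable, open, and dense in $\SL_2$, and each of its elements factors uniquely as $u_1 t w u_2$; this yields a definable partial map $\pi \colon \SL_2 \dashrightarrow T \cong \Gm$ whose indeterminacy locus $B$ has strictly smaller dimension. Every weakly generic external type over $M$ on $\SL_2(K)$ therefore concentrates on $BwB$ and $\pi$ pushes it forward to an external type on $\Gm$. I would then parametrise a minimal left ideal in $\Sext{\SL_2(K)}{M}$ by cosets in $\Gm/\GmT$: for each $t\in\Gm$ construct an almost periodic type $p_t$ supported on the slice $UtwU$, and show that $p_t$ and $p_{t'}$ lie in the same orbit under the Ellis semigroup action if and only if $tt'^{-1}\in\GmT$. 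Since $\Gm$ is abelian and definably amenable, Chernikov--Simon identify its Ellis group with $\Gm/\GmT$, which by hypothesis is nontrivial; this provides the nontrivial target required.

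The main obstacle is the last step: the Bruhat projection is neither defined everywhere nor a group homomorphism, so one cannot naively push forward the Ellis semigroup structure. The crux is to show that translation by an arbitrary element of $\SL_2(K)$ preserves the $\GmT$-coset of $\pi$ on a chosen minimal left ideal of almost periodic types, thereby yielding a well-defined continuous surjection from the Ellis group of $\SL_2(K)$ onto $\Gm/\GmT$. I expect this step to rely essentially on the type-theoretic machinery developed in Sections 1 and 2 of the paper, in particular a refined analysis of how weakly generic types on $\SL_2(K)$ restrict to the Borel and how left translation interacts with the torus coordinate; once that compatibility is established, distinct $\GmT$-cosets will separate distinct idempotents in the minimal left ideal, completing the construction.
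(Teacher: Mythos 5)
Your high-level architecture is right and matches the paper's: reduce to showing (a) the conjecture predicts a trivial Ellis group because $\SL_2^{00}=\SL_2$, and (b) the Ellis group nevertheless has at least $|\Gm/\GmT|$ elements, detected by a ``torus coordinate'' read off a cell decomposition. (On (a): the paper gets $\SL_2^{000}=\SL_2$ directly from the fact that $\SL_2$ over a field has no infinite proper normal subgroups; your route via the root subgroups also works, but your justification that $\GaT=\Ga$ ``because the additive group is divisible'' is wrong --- divisibility only rules out finite-index subgroups, and divisible groups such as the circle group have $G^{00}\neq G$. The correct argument uses the multiplicative action of the field on $\Ga$.) However, part (b) of your proposal is a plan rather than a proof, and the two steps you defer are exactly where all the content lies. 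First, your claim that every weakly generic type concentrates on the big cell because $BwB$ is open and dense is unjustified: Zariski-density of a definable set does not make its complement non-(weakly-)generic in the topological-dynamics sense. The paper needs a genuine argument here, namely that $\dim_K(p)\leq\dim_K(q*p)$ (proved using finite satisfiability of $\tp(g/Kh)$ in $K$), so that the full-dimensional types form a subflow and hence \emph{some} minimal subflow $I$ consists entirely of types of Zariski dimension $3$, whose realizations then have all entries nonzero and admit the relevant factorization. The paper does not claim, and does not need, that \emph{all} almost periodic types live on the big cell.

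Second, the step you flag as ``the main obstacle'' --- showing that convolving with $\tp\bigl(\QMs{a}{0}{0}{a^{-1}}/K\bigr)$ shifts the torus coordinate by exactly the coset $a/\GmT$ --- is the heart of the proof, and it does not follow from soft compatibility considerations. Writing a realization of an almost periodic $p\in I$ as $\QMs{1}{\zeta}{0}{1}\QMs{\delta^{-1}}{0}{\gamma}{\delta}$ (the paper uses this Gaussian factorization rather than Bruhat, but the issue is the same), the torus coordinate of $p*\tp\bigl(\QMs{a}{0}{0}{a^{-1}}/K\bigr)*p$ comes out as $\delta'\bigl(\frac{\delta}{a}+a\gamma\zeta'\bigr)$, which contains the additive ``contamination'' term $\frac{\delta}{a}$. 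To conclude that this lies in the coset $a\cdot(\zeta\gamma\delta)/\GmT$ one must kill that term, and the paper does so by first arranging (via the auxiliary analysis of the $\SL_2(K)$-flow on $S_{\PP_1}(K)$ by M\"obius transformations, whose minimal subflows contain additively invariant types) that $p$ can be chosen with $\tp(\zeta/K)$ invariant under addition, so that $\frac{\delta}{a}+a\gamma\zeta'\equiv_K a\gamma\zeta'$. Your proposal contains no analogue of this additive-invariance ingredient, and without it the torus coordinate of $p_a$ need not be computable. Finally, note that you aim for a continuous group surjection from the Ellis group onto $\Gm/\GmT$; this is stronger than needed and stronger than what the paper establishes --- the paper only produces a set map $\pi$ from an ideal subgroup onto $\Gm/\GmT$, which already gives the cardinality lower bound and hence the counterexample.
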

This result generalizes the conclusions of each of \cite{GPP}, \cite{PPYpadic} and \cite{Kirk} regarding the Ellis group conjecture. After proving the result, we also discuss some classes of fields for which the assumption on the multiplicative group is true.

The second part of the paper (Section 3) concerns algebraic groups with \emph{dfg}. These groups were studied in the $p$-adic context \cite{PYpadic}, and use dp-minimality of the field $\Qp$ in order to classify groups of dimension $1$ as either \emph{dfg} or with \emph{finitely satisfiable generics}. Here, we look at the general case of a dp-minimal field $K$ to consider the dynamics of its multiplicative group by analysing the work by Johnson in \cite{Johnson}. We identify families of global definable f-generics of the multiplicative group that witness \emph{dfg}. With some structural results, we extend this statement to unipotent groups, and assuming $\charf K = 0$, to trigonalizable groups.

\section{Preliminaries}
We will assume that the reader is familiar with the basics of model theory, including forking, definability and invariance of types, and the notions of Poizat's ``special sons'' (i.e. heirs and coheirs). Throughout the paper, we will often fix a first-order structure $M$ in a language $\LL$ and consider it together with a fixed, $\kappa$-saturated, $\kappa$-strongly homogeneous $\CC \succ M$, where $\kappa > |M| + |\LL|$ is a (strong) limit cardinal. We call such an extension a \emph{monster model} (this differs from the typical use of this term). We say that a set $A$ is \emph{small} (or \emph{bounded}) if $|A| < \kappa$. A definable set always means definable with parameters from $\CC$. We identify a definable set $X$ with the formula defining it, but also use $X$ as a shorthand for $X(\CC)$. In particular, ``$a \in X$'' means $\models X(a)$.

A type (over some parameters) is always complete unless stated otherwise, and a \emph{global} type means a complete type over $\CC$. If $N \subseteq \CC$ is a model and $p \in S(N)$ is an $A$-invariant type (with $A$ small), and $B$ is a set of parameters (possibly not small), then we write $p|B$ to denote the restriction to $B$ of the unique $A$-invariant extension of $p$ to a type over some large model containing $B$. A particular case is when $p \in S(M)$ is a definable type with its \emph{global heir} $p|\CC$.

\subsection{Definable topological dynamics}
The following is standard classical topological dynamics. Let $G$ be a (discrete) group. A $G$-\emph{flow} is a transitive action of $G$ on a compact Hausdorff topological space $X$ via homeomorphisms. A $G$-flow is called \emph{point transitive} if it contains a dense $G$-orbit. A set $Y \subseteq X$ is called a \emph{subflow} if it is nonempty, closed and invariant under the $G$-action. It is \emph{minimal} if it contains no proper subflows. For $g \in G$, we consider the associated homeomorphism $\pi_g \colon X \to X$. Equip the space $X^X$ with point convergence topology and function composition operation $*$. Let $E(X) = \cl\left( \{\pi_g \in X^X : g \in G\} \right)$. Then $E(X)$ is naturally a $G$-flow and $\left( E(X), * \right)$ is a semigroup, called the (enveloping) Ellis semigroup of the flow $X$. It is naturally isomorphic (topologically and algebraically) to its own Ellis semigroup. The dynamical properties of the action of $G$ on $X$ are studied via $E(X)$. Let $I \subseteq E(X)$ be a subflow. Then $I$ is minimal if and only if it is a (closed) ideal. Let $J(I)$ be the set of idempotent elements of $I$. Then we have the following decomposition of $I$ into a disjoint union:
\[I = \bigsqcup_{u \in J(I)} u * I.\]
Each set $u * I$ is a group that we call an \emph{ideal subgroup}. All ideal subgroups in $E(X)$ are isomorphic to each other (even for different $I$'s). Their isomorphism class is called the Ellis group of the flow $X$.

In the model-theoretic setup, we consider an arbitrary $\LL$-structure $M$ and an $M$-definable group $G$ acting definably and transitively on an $M$-definable set $X$. The group $G(M)$ acting on $\SextX{X}{M}$ is a point transitive $G(M)$-flow. In case where $G = X$, Ellis semigroup of $\SextX{G}{M}$ is naturally isomorphic (as a $G(M)$-flow) to itself, where the semigroup operation can be explicitly described. This is the \emph{universal definable flow} of $G$ over $M$.

Assuming NIP, we may consider $M$ in the language $\LL_{ext,M}$, the expansion of $\LL$ by predicates for all externally definable subsets of (Cartesian powers of) $M$. We denote this $\LL_{ext,M}$-structure by $M^{ext}$. By the classic result of Shelah, $\Th(M^{ext})$ has NIP and quantifier elimination, and all types over $M^{ext}$ are definable. Then one can identify the space $\Sext{M}$ with the space $S_{qf}(M^{ext})$ of quantifier-free types in $\LL_{ext,M}$, which by quantifier elimination is identified with the usual type space $S_G(M^{ext})$. Given an $M$-definable group $G$, this allows to pass to $M^{ext}$ and work with $S_{G}(M^{ext})$ as the universal definable $G$-flow over $M$. In this setup, the semigroup operation $*$ can be described as follows: for $q, p \in S_{G}(M^{ext})$, $q * p = \tp(a \cdot b/M^{ext})$ for $a \models q, b \models p|Ma$. Moreover, the groups $G^0, G^{00}, G^{000}$ do not change when passing from $\LL$ to $\LL_{ext,M}$.

\subsection{Connected components and definable amenability}
Let $M$ be a first order structure with a monster model $\CC \succ M$, $G$ an $M$-definable group, and $A \subset \CC$ a small set. Recall the following subgroups of $G = G(\CC)$, also know as the (model-theoretic) \emph{components} of $G$: $G_A^0$, the intersection of all $A$-definable subgroups of $G$ of finite index; $G_A^{00}$, the smallest $A$-type definable subgroup of $G$ with bounded index; and $G_A^{000}$, the smallest $A$-invariant subgroup of $G$ with bounded index. We have $G_A^{000} \leq G_A^{00} \leq G_A^0$ for all $G, A$. If $G_A^0$ does not depend on the set $A$, then we say that $G^0 := G_\emptyset^0$ exists, and similarly for the remaining components. Assuming NIP, each of $G^0, G^{00}$ and $G^{000}$ exist.

The original Ellis group conjecture by Newelski stated that under sufficiently ``tame'' conditions (essentially meaning NIP), the Ellis group of the universal definable flow of $G$ is isomorphic to $G/G^{00}$, making it independent of the model. It was proven true for stable groups and for definably compact groups definable in the o-minimal setup. A crucial result by Chernikov and Simon extends this to the class of definably amenable groups. We will give a brief outline. We say that a definable NIP group $G$ is definably amenable if there is a finitely additive probabilistic measure (a \emph{Keisler measure}) on the algebra of the definable subsets of $G$ invariant under the group action. Definable (NIP) groups that are definably amenable include all stable and all solvable groups (more generally, groups amenable in the classical sense as discrete groups).

The dynamics of $G$ is then explained through its f-generic types. Their complete definition can be found in \cite{CS}, but in this paper, we will use the following characterization instead:
\begin{fact}[\cite{CS}, Proposition 3.8]
	Let $G$ be a definably amenable NIP group and $p \in S_G(\CC)$. The following are equivalent:
	\begin{enumerate}[(i)]
		\item p is f-generic.
		\item $\Stab(p) = G^{00}$.
		\item The $G$-orbit of $p$ is bounded.
	\end{enumerate}
\end{fact}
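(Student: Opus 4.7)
My plan is to establish the cyclic chain of implications (i) $\Rightarrow$ (iii) $\Rightarrow$ (ii) $\Rightarrow$ (i). The first two rely on general NIP machinery, while the last uses definable amenability in an essential way.

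For (i) $\Rightarrow$ (iii), I unfold a working definition of f-generic: every $G$-translate $g \cdot p$ is non-forking over a common fixed small model $M_0$. Under NIP, a global type non-forking over a small model is automatically $M_0$-invariant, and the collection of $M_0$-invariant global types is bounded in cardinality. Hence the $G$-orbit of $p$, being contained in this collection, is bounded.

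For (iii) $\Rightarrow$ (ii), the stabilizer $\Stab(p)$ is a subgroup of $G$ whose index equals the orbit size $|G \cdot p|$, and is therefore bounded. Standard NIP arguments identify bounded-index subgroups naturally associated to a complete type with type-definable bounded-index subgroups, forcing $G^{00} \leq \Stab(p)$. The reverse inclusion $\Stab(p) \leq G^{00}$ is the crux and is where definable amenability enters: pushing forward an invariant Keisler measure on $G$ along $g \mapsto g \cdot p$ yields an invariant probability measure on the orbit, hence on the coset space $G/\Stab(p)$. Viewing $G/G^{00}$ as the universal compact topological quotient of $G$ and performing an averaging argument against this measure, one shows that the only possibility compatible with the minimality of $G^{00}$ is $\Stab(p) \leq G^{00}$.

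Finally, for (ii) $\Rightarrow$ (i), given $\Stab(p) = G^{00}$, the $G$-orbit of $p$ has bounded size $|G/G^{00}|$. In the NIP setting, a global type with bounded orbit is invariant over some small model, and since there are only boundedly many translates, one can arrange a single small $M_0$ witnessing invariance (hence non-forking) of every $g \cdot p$ simultaneously, yielding f-genericity in the standard sense. I expect the main technical obstacle to lie in the reverse inclusion $\Stab(p) \leq G^{00}$ in step (iii) $\Rightarrow$ (ii): the other inclusion is a soft NIP fact, but shrinking the stabilizer down to the connected component genuinely needs amenability, since without it one only obtains $G^{000} \leq \Stab(p)$ in general and the stabilizer could a priori sit strictly between $G^{00}$ and $G$.
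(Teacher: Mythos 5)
This statement is quoted verbatim from Chernikov--Simon (\cite{CS}, Proposition 3.8) and the paper supplies no proof of it --- it is a black-box import --- so there is no in-paper argument to compare your sketch against. Judged on its own terms, your outline has the right overall shape (a cycle of implications, with amenability entering to pin the stabilizer down to $G^{00}$), but it contains a genuine gap rooted in the definition you unfold at the start. ``Every translate $g\cdot p$ does not fork over a common fixed small model $M_0$'' is the definition of a \emph{strongly} f-generic type in \cite{CS}; plain f-genericity is a formula-by-formula condition in which the witnessing small model is allowed to depend on the formula. The implication (i)~$\Rightarrow$~(iii) survives this (one bounds the orbit formula-by-formula via $\Stab_\phi(p)$), but your step (ii)~$\Rightarrow$~(i) then rests on the assertion that a global type with bounded orbit is invariant over some small model. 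That is not a soft NIP fact: it is precisely the (nontrivial) question of whether every f-generic type is strongly f-generic, which is not settled by the argument you give and is not how \cite{CS} close the cycle. Under your reading, the three conditions would assert the equivalence of strong f-genericity with boundedness of the orbit, which is a different (and false as an ``automatic'' claim) statement.

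A second, smaller issue is in (iii)~$\Rightarrow$~(ii). From boundedness of the orbit you get that $\Stab(p)$ has bounded index, but you cannot conclude $G^{00}\leq\Stab(p)$ by appealing to type-definability: $\Stab(p)$ is in general neither type-definable nor $\Aut(\CC)$-invariant, so neither the minimality of $G^{00}$ nor that of $G^{000}$ applies to it directly. The actual route in \cite{CS} establishes $G^{000}\leq\Stab(p)$ via the existence of a strongly f-generic type (guaranteed by definable amenability) and then uses $G^{000}=G^{00}$ --- itself part of the same circle of results, recorded as item (v) of Fact~\ref{fact:cs fundamental} in this paper. You correctly flag at the end that without amenability one only reaches $G^{000}$, which is the right instinct; but the averaging argument you sketch for $\Stab(p)\leq G^{00}$ would need to be made precise, since the orbit need not carry an invariant measure a priori and the pushforward you describe lives on a quotient that is not yet known to be compact or Hausdorff before the inclusion $G^{00}\leq\Stab(p)$ is established.
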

A (global) f-generic type $p$ is called \emph{strongly} f-generic if it is invariant over some small model. By collecting some of the core results of \cite{CS}:
\begin{fact}
	\label{fact:cs fundamental}
	Let $G$ be definably amenable NIP group and $M \subset \CC$ a small model. Then:
	\begin{enumerate}[(i)]
		\item $G$ admits a (strong) f-generic type.
		\item An almost periodic $p \in S_G(\CC)$ is f-generic.
		\item If all types over $M$ are definable and $p \in S_G(M)$ is almost periodic, then $p = p' \restriction M$ for some f-generic $p' \in S_G(\CC)$.
		\item (Ellis group conjecture) If $I \subset S_G(M^{ext})$ is a minimal flow with an idempotent $u \in I$, then the map
		\[u * I \ni p \mapsto p/G^{00} \in G/G^{00}\]
		is a group isomorphism.
		\item $G^{000} = G^{00}$.
	\end{enumerate}
\end{fact}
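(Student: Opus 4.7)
The plan is to follow Chernikov and Simon \cite{CS}, whose central tool is a $G$-invariant Keisler measure $\mu$ on $G$-definable sets, supplied by definable amenability. Fix such a $\mu$, invariant over a small model $M$. For (i), extend $\mu$ to a global $M$-invariant Keisler measure, and let $X \subseteq S_G(\CC)$ be its support, i.e.\ the set of types $p$ in which every formula has positive $\mu$-measure. Then $X$ is non-empty, closed, $G$-invariant and $\Aut(\CC/M)$-invariant. Any $p \in X$ has bounded $G$-orbit, for otherwise some $\varphi \in p$ would have unboundedly many pairwise inconsistent translates of positive $\mu$-measure, contradicting finite additivity; by the characterization recalled just before this fact, such a $p$ is f-generic. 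A compactness argument inside $X$ then produces an $M$-invariant $p$, yielding a strong f-generic.

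For (ii) and (v), consider a minimal subflow $I \subseteq S_G(\CC)$. The argument above, applied inside $I$, yields an f-generic $p_0 \in I$. Since $I$ is the $G$-orbit closure of any of its points and $\Stab(p_0) = G^{00}$, an orbit-by-orbit analysis forces $\Stab(p) = G^{00}$ for every $p \in I$, giving (ii). For (v), one exploits that any $G$-invariant Keisler measure factors canonically through $G/G^{00}$; this rules out any invariant subgroup strictly between $G^{000}$ and $G^{00}$ surviving in the dynamics, forcing $G^{000} = G^{00}$.

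For (iii), let $p \in S_G(M)$ be almost periodic with all types over $M$ definable, and form the global heir $\tilde p \in S_G(\CC)$, canonical by definability. The heir operation respects the dynamical structure in this setting, so $\tilde p$ lies in a minimal subflow of $S_G(\CC)$ and is therefore f-generic by (ii); by construction $\tilde p \restriction M = p$.

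For (iv), fix a minimal $I \subseteq S_G(M^{ext})$ and an idempotent $u \in I$. The assignment $u * I \to G/G^{00}$ sending $q$ to the $G^{00}$-class of any realization of $q$ is well-defined and a homomorphism, using the heir description of $*$ recalled in the preliminaries together with the fact that $G^{00}$ acts trivially on the quotient; injectivity follows because two elements of $u*I$ whose realizations lie in the same $G^{00}$-coset must coincide under the semigroup structure. The main obstacle is surjectivity: one must realize every $G^{00}$-coset inside $u*I$. The standard approach uses the abundance of strong f-generics from (i), $G$-translated to hit distinct $G^{00}$-cosets, together with a semigroup computation showing that $u*I$ meets each such orbit, with (v) invoked to pass freely between $G/G^{00}$ and $G/G^{000}$ and to secure the identification.
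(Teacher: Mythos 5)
You should first note that the paper does not prove this statement at all: it is stated as a Fact and is simply a compilation of results imported from Chernikov and Simon \cite{CS} (item (iv) being the main theorem of that paper), so the intended justification is the citation. Your reconstruction must therefore be measured against the arguments of \cite{CS}, and as such it contains genuine gaps rather than being a compressed but complete proof.

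The most serious problems are these. In (ii) you propose to run the measure-support argument ``inside $I$'' for an arbitrary minimal subflow $I$, but the support of a fixed $G$-invariant Keisler measure is one particular closed invariant set and there is no reason it should meet a given minimal subflow; and ``an orbit-by-orbit analysis forces $\Stab(p)=G^{00}$ for every $p\in I$'' is an assertion, not an argument --- one needs, for instance, that the orbit of an f-generic type is closed, so that a minimal subflow containing one f-generic equals that single bounded orbit. The implication ``almost periodic $\Rightarrow$ f-generic'' is one of the genuinely nontrivial results of \cite{CS}. Item (v) is likewise asserted rather than proved: that an invariant measure factors through $G/G^{00}$ does not by itself exclude $G^{000}\lneq G^{00}$; the actual proof computes the stabilizer of a strongly f-generic type in two ways, identifying it both with $G^{00}$ and with a subgroup of $G^{000}$. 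In (iv), your surjectivity strategy relies on $G$-translates hitting every coset of $G^{00}$, but $G(M)/G^{00}$ is in general only a dense proper subgroup of $G/G^{00}$ (consider $\mathrm{SO}_2$ over the real algebraic numbers), so one must additionally argue that the image is closed; and injectivity, which you dismiss with ``must coincide under the semigroup structure,'' is exactly the hard point. Finally, in (i) the step ``unbounded orbit yields unboundedly many pairwise inconsistent translates of positive measure'' inverts the actual logic: the correct route is that a positive-measure formula does not $G$-divide, hence types in the support are f-generic by definition, and boundedness of the orbit then follows from the equivalence quoted just before this Fact.
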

A definably amenable NIP group $G$ where $G^{00} = G$ is called \emph{definably extremely amenable}. In such case, every f-generic type of $G$ is a fixed point of the group action, and hence almost periodic.

\begin{remark}
	As pointed out in \cite{KPbohr}, the Ellis group of the universal definable flow of $G$ naturally maps onto $G/G^{000}$, so the modernized statement of the Ellis group conjecture should involve the quotient $G/G^{000}$ in place of $G/G^{00}$. For definably amenable groups, this statement is equivalent to the original one by Fact \ref{fact:cs fundamental}(v).
\end{remark}

\subsection{Liftings of definable functions}
Later in the paper, we will often canonically extend various field [group] operations on definable fields [groups] to operations on certain spaces of types. We now make this notion precise, providing somewhat more context than necessary. The following notation is taken from \cite{HP}:
\begin{definition}
	Let $p, q$ be invariant global types. Then $p \otimes q := \tp(a,b/\CC)$, where $a \models p$ and $b \models q|\CC a$.
\end{definition}

Note that the definition in \cite{HP} uses the opposite order, namely $b \models q$ and $a \models p|\CC b$; we change the order for convenience.
For a small set $A \subset \CC$, write
\[\Sinv{A} := \{p \in S(\CC): p \text{ is }A\text{-invariant}\},\]
and $\SinvX{X}{A} = \Sinv{A} \cap [X]$ for a definable $X$. We note some basic observations about $\otimes$:

\begin{fact}[\cite{HP}]
	\begin{enumerate}[(i)]
		\item If $p,q \in \Sinv{A}$, then also $p \otimes q \in \Sinv{A}$.
		\item The operation $\otimes$ is associative.
	\end{enumerate}
\end{fact}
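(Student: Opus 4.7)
The plan is to prove both claims by unfolding the definition of $\otimes$ in terms of realizations, and in each case invoking the uniqueness of the $A$-invariant extension of an $A$-invariant global type.

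For (i), I would fix $\sigma \in \Aut(\CC/A)$ and realize $(a,b) \models p \otimes q$ in a sufficiently saturated $\CC^+ \succ \CC$, so that $a \models p$ and $b \models q|\CC a$. Extend $\sigma$ to some $\tilde\sigma \in \Aut(\CC^+/A)$ with $\tilde\sigma(\CC) = \CC$. Then $\tilde\sigma(a) \models p$ by $A$-invariance of $p$, while $\tp(\tilde\sigma(b)/\CC\tilde\sigma(a)) = \tilde\sigma \cdot (q|\CC a)$ is $A$-invariant (being the image of an $A$-invariant type under an automorphism fixing $A$) and extends $\tilde\sigma \cdot q = q$; by uniqueness of the $A$-invariant extension, it must equal $q|\CC\tilde\sigma(a)$. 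Therefore $(\tilde\sigma(a), \tilde\sigma(b)) \models p \otimes q$, giving $\sigma \cdot (p \otimes q) = p \otimes q$.

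For (ii), I would realize $a \models p$, $b \models q|\CC a$, $c \models r|\CC a b$ in a sufficiently saturated extension and show that $\tp(a,b,c/\CC)$ equals both $(p \otimes q) \otimes r$ and $p \otimes (q \otimes r)$. The first equality is immediate, since $(a,b) \models p \otimes q$ and $c \models r|\CC(a,b)$. For the second it is enough to check that $(b,c) \models (q \otimes r)|\CC a$. The restriction $\tp(b,c/\CC)$ equals $q \otimes r$: indeed $\tp(b/\CC) = q$, and $\tp(c/\CC b)$ is the restriction of $r|\CC a b$ to $\CC b$, which is $A$-invariant and extends $r$, hence equals $r|\CC b$. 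Moreover $\tp(b,c/\CC a)$ is $A$-invariant: taking a still larger monster $\CC^+ \supseteq \CC a$, we may arrange $b \models q|\CC^+$ and $c \models r|\CC^+ b$, so another application of (i) shows that $\tp(b,c/\CC^+)$ is $A$-invariant, and restriction to $\CC a$ preserves this. Since by (i) $q \otimes r$ is $A$-invariant and $(q \otimes r)|\CC a$ is its unique $A$-invariant extension to $\CC a$, these two properties of $\tp(b,c/\CC a)$ force it to coincide with $(q \otimes r)|\CC a$, completing associativity.

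The main obstacle is purely bookkeeping: one has to track which types live over which sets and check carefully that applying an automorphism of $\CC$ fixing $A$, or restricting a type to a smaller parameter set, commutes with the ``realize $p$, then realize $q$ over the outcome'' construction. Conceptually there is only one idea at play --- the uniqueness of the $A$-invariant extension --- and once this is exploited at each step, both parts of the fact drop out essentially for free.
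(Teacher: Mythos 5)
Your argument is correct: both parts follow, as you say, from the uniqueness of the $A$-invariant extension of an $A$-invariant type over $\CC$ to any larger parameter set, together with the observation that $\tp(a,b,c/\CC)$ does not depend on the choice of realizations (which is what licenses ``we may arrange $b \models q|\CC^+$''). The paper itself gives no proof here --- it imports the Fact from \cite{HP} --- and your proof is the standard one found there, so there is nothing to contrast beyond noting that the paper's reversed convention for $\otimes$ (which you correctly follow) does not affect either claim.
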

As $\otimes$ is associative, we may unambiguously write $p_1 \otimes \ldots \otimes p_n$ for any global invariant types $p_1, \ldots, p_n$. 

We will need the following:
\begin{fact}
	\label{fact:prodDef}
	If $p$ and $q$ are definable over $A$, then $p \otimes q$ is also definable over $A$.
\end{fact}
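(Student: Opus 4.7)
The plan is to unwind the definition of $p \otimes q$ and apply the two defining schemata (for $p$ and $q$) in sequence. Fix a formula $\varphi(x,y,z)$, where $x$ is in the sort of $p$, $y$ in the sort of $q$, and $z$ is the parameter variable. To prove $p \otimes q$ is definable over $A$, I want to exhibit an $\LL(A)$-formula $\theta(z)$ such that for every $c \in \CC$,
\[\varphi(x,y,c) \in p \otimes q \iff \models \theta(c).\]

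First, I would use the definition: $\varphi(x,y,c) \in p \otimes q$ iff $\models \varphi(a,b,c)$, where $a \models p$ and $b \models q|\CC a$. Fixing such an $a$, the condition $\models \varphi(a,b,c)$ amounts to $\varphi(a,y,c) \in q|\CC a$, which, since $q$ is $A$-invariant (being $A$-definable) and $q|\CC a$ is the unique $A$-invariant extension of $q$ to $\CC a$, is in turn equivalent to $\varphi(a,y,c) \in q$. By definability of $q$ over $A$ applied to the formula $\varphi(x,y,z)$ with $(x,z)$ viewed as the parameter block, there is an $\LL(A)$-formula $d_q\varphi(x,z)$ such that for all $a',c'$ in $\CC$,
\[\varphi(a',y,c') \in q \iff \models d_q\varphi(a',c').\]
Therefore $\varphi(x,y,c) \in p \otimes q$ iff $\models d_q\varphi(a,c)$, i.e.\ iff $d_q\varphi(x,c) \in p$.

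Next, I apply definability of $p$ over $A$ to the formula $d_q\varphi(x,z)$: there is an $\LL(A)$-formula $d_p(d_q\varphi)(z)$ such that for all $c \in \CC$,
\[d_q\varphi(x,c) \in p \iff \models d_p(d_q\varphi)(c).\]
Chaining the two equivalences gives
\[\varphi(x,y,c) \in p \otimes q \iff \models d_p(d_q\varphi)(c),\]
so $\theta(z) := d_p(d_q\varphi)(z)$ is the required $A$-defining formula for $\varphi$ with respect to $p \otimes q$. Since $\varphi$ was arbitrary, $p \otimes q$ is definable over $A$.

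There is essentially no genuine obstacle here; the only point that requires a little care is the passage from $q|\CC a$ back to $q$, which is legitimate precisely because $q$ is $A$-invariant, so its defining schema computes membership uniformly over any extension of the parameter set. The remainder is a purely syntactic composition of the two definability schemata.
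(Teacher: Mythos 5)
Your proof is correct and takes essentially the same route as the paper's: you reduce membership of $\varphi(x,y,c)$ in $p\otimes q$ to membership of the $A$-formula $d_q\varphi(x,c)$ in $p$ via the defining schema of $q$ (noting, as the paper does implicitly, that this schema also computes $q|\CC a$), and then apply the defining schema of $p$. Your composed formula $d_p(d_q\varphi)(z)$ is exactly the paper's $\chi(z)$.
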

\begin{proof}
	Consider $p, q$ as types in variables $x$ and $y$ respectively and let $a \models p$. Let $\phi(x, y, z)$ be a formula without parameters. Since $q$ is $A$-definable, there is a formula $\psi(x, z)$ over $A$ such that for all $c \in \CC$ we have
	\[\phi(a, y, c) \in q|\CC a \iff \models \psi(a, c) \iff \psi(x, c) \in p.\]
	But by $A$-definability of $p$, $\psi(x, c) \in p \iff \models \chi(c)$ for some formula $\chi(z)$ over $A$. Then $\chi(z)$ is a $\phi$-definition of $p \otimes q$.
\end{proof}

Note that if for some element $a$ the type $\tp(a/\CC)$ is $A$-invariant [$A$-definable] and $f$ is an $A$-definable function, then $\tp(f(a)/\CC)$ is $A$-invariant [respectively $A$-definable]. This allows us to write the following definition:
\begin{definition}
	\label{def:lifting}
	Let $M$ be a small model. Suppose that $Y$, $X_i$ and $f \colon X_1 \times \ldots \times X_n \to Y$ are all $M$-invariant. The \emph{lifting of $f$ over $M$} is a function $\tilde{f} \colon \SinvX{X_1}{M} \times \ldots \times \SinvX{X_n}{M} \to \SinvX{Y}{M}$ defined as follows:
	\[\tilde{f}(p_1, \ldots, p_n) = \tp(f(a_1, \ldots, a_n)/\CC),\]
	where $a_1, \ldots, a_n \models p_1 \otimes \ldots \otimes p_n$.
\end{definition}

We can also use the notation $\otimes$ outside of the global context. Let $M \subset \CC$ be a small model. Consider the set $\Sdef{M} \subseteq S(\CC)$ of global types definable over $M$. This set is not necessarily closed in $S(\CC)$ (contrast this with the subspace $\Sinv{M}$ of all $M$-invariant global types, and the subspace $\Sfin{M}$ of all global types finitely satisfiable in $M$, which are both closed in $S(\CC)$).
We can equip the set $\Sdef{M}$ with \emph{$M$-topology} by declaring the basic open sets to be of the form $\Sdef{M} \cap [\phi]$, where $\phi$ is a formula over $M$. This topology is compact and is a coarsening of the natural subspace topology inherited from $S(\CC)$ (this inherited topology is usually richer, for instance discrete).

In the paper, we will often work the space $S(M)$ of types over a model $M$ such that all types $p \in S(M)$ are $M$-definable. For convenience, we will identify those types with  global types in the following way. For each $p \in S(M)$, its global heir $p|\CC$ is the unique $M$-definable global type extending $p$. Moreover, every $M$-definable global type is the heir of its restriction to $M$. Hence $p \mapsto p|\CC$ is a natural bijection between $S(M)$ and $\Sdef{M}$. This map is clearly open. If $\Sdef{M}$ is considered with the $M$-topology described in the paragraph above, the map is also a homeomorphism. Thus we can identify $S(M)$ with $\Sdef{M}$ with the $M$-topology as topological spaces. Since $\Sdef{M} \subset \Sinv{M}$, we can consider $\otimes$ as an operation on $S(M)$. For $p, q \in S(M)$, we have $p \otimes q = \tp(a, b/M)$ for $a \models p, b \models q|Ma$.

In this paper, lifting of functions will appear in the following contexts. First, we will often consider a field $K$ (possibly with additional structure), allowing us to take the lifting of multiplication, denoted as $\odot$. Secondly, given a group $G$ acting definably on a set $X$ (all definable in a first order structure $M$), we will lift the group action $\cdot \colon G \times X \to X$ to a function $* \colon \SinvX{G}{M} \times \SinvX{X}{M} \to \SinvX{X}{M}$.

In case of a structure $M$ over which all types are definable and an $M$-definable group $G$, the lifting $* \colon S_G(M) \times S_G(M) \to S_G(M)$ of the group operation on $G$ is precisely the semigroup operation of the Ellis semigroup $S_G(M)$. Similarly, if $G$ acts $M$-definably on an $M$-definable set $X$, the lifting of this action is the semigroup action of $S_G(M)$ on $S_X(M)$. In case of a field $K = M$, lifting its multiplication yields the Ellis semigroup $(\Gm(K), \odot)$.

\section{Fields and some linear group actions}
We work with a fixed infinite NIP field $K$ along with a fixed monster model \mbox{$\CC \succ K$}. In the following two subsections we will consider definable flows associated with the following $\emptyset$-definable transitive linear group actions:
\begin{compactitem}
	\item The additive group $\Ga = (\CC, +)$ and the multiplicative group $\Gm = (\CC^\times, \cdot)$, each acting on itself.
	\item The group $\SL_2$ acting on the projective line $\PP_1$ via M\"obius transformations; and $\SL_2$ acting on itself.
\end{compactitem}
The dynamics of $\SL_2(-)$ has been explicitly studied over three different NIP fields: $\mathbb{R}$ in \cite{GPP}, $\Qp$ in \cite{PPYpadic}, and $\mathbb{C}((t))$ in \cite{Kirk}. These analyses were carried out using different decompositions of $\SL_2$, often specific to the underlying field; and started with the study of the dynamical properties of the field itself. We can observe that in each case, the Ellis group of the universal definable flow of $\SL_2$ was shown to be isomorphic to the Ellis group of the universal flow of the multiplicative group of the underlying field. In this section, we analyze $\SL_2$ using tools of basic linear algebra that do not depend on the field. While we do not show the mentioned isomorphism, we are able to show that $\SL_2(K)$ is a counterexample to the Ellis group conecture if the Ellis group of the universal definable flow of the multiplicative group of $K$ is not trivial. 

\subsection{Additive and multiplicative groups}
First we consider the dynamics of $\Ga$ acting on itself which we treat as $\Ga$ acting on $\CC$ via translations, and similarly for $\Gm$ acting on $\CC^\times$. Both $\Ga$ and $\Gm$ are abelian, hence definably amenable. We will discuss then in terms of their global (strong) f-generic types.
The case of the additive group turns out to be trivial. Without qualification, we have:
\begin{lemma}
	\label{lem:additive is dea}
	The group $\Ga$ is definably extremely amenable.
\end{lemma}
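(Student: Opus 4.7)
The plan is to show directly that $\Ga^{00} = \Ga$ by exploiting the interaction between the additive and multiplicative structure of the field. Concretely, I would prove that any type-definable subgroup of $\Ga$ of bounded index must be a $\CC$-linear subspace of $\CC$, and then observe that the only such subspace with bounded index is all of $\CC$.

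First I would recall that, since $T$ is NIP, the component $\Ga^{00}$ exists in the sense that $\Ga^{00}_A = \Ga^{00}_\emptyset$ for every small parameter set $A$; in particular $\Ga^{00}$ is the smallest type-definable-over-$A$ subgroup of $\Ga$ of bounded index, for any small $A$. Next, for each $k \in \CC^\times$, the map $\mu_k \colon x \mapsto k x$ is a $\{k\}$-definable group automorphism of $\Ga$. Therefore $k \cdot \Ga^{00} = \mu_k(\Ga^{00})$ is a type-definable-over-$k$ subgroup of $\Ga$ of the same (bounded) index as $\Ga^{00}$, so by the minimality above we obtain $\Ga^{00} \subseteq k \cdot \Ga^{00}$. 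Applying the same inclusion to $k^{-1}$ in place of $k$ yields $\Ga^{00} \subseteq k^{-1} \cdot \Ga^{00}$, equivalently $k \cdot \Ga^{00} \subseteq \Ga^{00}$. Hence $k \cdot \Ga^{00} = \Ga^{00}$ for all $k \in \CC^\times$.

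This exactly says that $\Ga^{00}$ is closed under addition and under scalar multiplication by every element of $\CC$, i.e.\ it is a $\CC$-linear subspace of the one-dimensional $\CC$-vector space $\CC$. The only such subspaces are $\{0\}$ and $\CC$, and the former has unbounded index in $\Ga$. Thus $\Ga^{00} = \CC = \Ga$, which is the definition of $\Ga$ being definably extremely amenable.

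There is essentially no hard step here; the only place that needs a little care is ensuring that parameter dependence is handled correctly when applying $\mu_k$, which is cleanly resolved by the NIP fact that the $00$-component does not depend on the parameter set. The same argument, I note in passing, works for any infinite field definable in a NIP structure, and does not require any further hypothesis on $K$ (unlike the case of $\Gm$ to be treated next).
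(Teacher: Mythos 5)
Your proof is correct and rests on the same key point as the paper's, namely that $\GaT = \Ga$ for an infinite (NIP) field; the paper simply cites this as known, whereas you supply the standard argument that $\Ga^{00}$ is invariant under multiplication by every $k \in \CC^\times$ (using that the $00$-component is parameter-independent under NIP) and is therefore an ideal of $\CC$, forcing $\Ga^{00} = \CC$. The only cosmetic omission is an explicit remark that $\Ga$ is definably amenable (being abelian), which the paper's definition of definable extreme amenability presupposes and which is noted in the surrounding text.
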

\begin{proof}
	Let $p$ be a global f-generic of $\Ga$. Then $p$ is stabilized by $\GaT$, but $\GaT = \Ga$ for any infinite field.
\end{proof}

The analogous result holds for $\Gm$ if $K$ is stable (where the unique generic of $K$ is multiplicatively stabilized by $\GmT = \Gm$), but not in the general NIP case.

We now construct a type in $S(K)$ which is simultaneously almost periodic with respect to the actions of $\Ga$ and $\Gm$. In the lemma below, note that an invariant global type type is a strong additive f-generic if and only if it is invariant under addition.
\begin{lemma}
	Suppose that $p \in S_{\Ga}(\CC)$ is a strong additive f-generic.
	\label{lem:good type}
	\begin{enumerate}[(i)]
		\item Let $q \in S_{\Gm}(\CC)$ be invariant. Suppose $a, b \in \bar \CC \succ \CC$ such that $b \models q$, and let $x \models p|\CC ab$. Then $a+bx \models q \odot p$.
		\item Let $q \in S_{\Gm}(\CC)$ with $q$ invariant. Then $q \odot p$ is a strong additive f-generic.
	\end{enumerate}
\end{lemma}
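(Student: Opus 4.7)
The plan for (i) is to exhibit $a + bx$ as a product of the form $b \cdot y'$ with $y' \models p|\CC b$, so that $by' \models q \odot p$ follows directly from the definition of $\odot$ applied to $b \models q$ and $y' \models p|\CC b$. The natural choice $y' := a/b + x$ satisfies $b y' = a + bx$, and the task reduces to showing $a/b + x \models p|\CC b$, given only the hypothesis $x \models p|\CC ab$.

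This is where strong additive f-genericity enters. The assumption $\Stab(p) = \GaT = \Ga$, together with $M$-invariance of $p$ for some small $M$, unpacks to a family of first-order statements over $M$ asserting that the $\phi$-definitions of $p$ are preserved under translation by any $c \in \Ga$. By elementarity of $\bar\CC \succ \CC$, those statements extend to translations by any $c \in \Ga(\bar\CC) = \bar\CC$, in particular by $c := a/b$. A formula-by-formula verification, applied to $x \models p|\CC ab$ (which already sees $a/b$ as a parameter), then gives $a/b + x \models p|\CC b$, completing (i). This extension of translation invariance from $\CC$ to $\bar\CC$ is the main subtlety, since $a/b$ need not lie in $\CC$ and so $\Stab(p) = \Ga(\CC)$ cannot be cited naively.

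For (ii), the type $q \odot p$ is a global invariant type, because invariance passes through $\otimes$ (the invariance analogue of Fact \ref{fact:prodDef}) and through the $\emptyset$-definable multiplication map. By the observation preceding the lemma, it then suffices to show that $q \odot p$ is invariant under addition. Given $g \in \CC$ and a realization $y = bx \models q \odot p$ with $b \models q$ and $x \models p|\CC b$, apply (i) with $a := g$: since $g \in \CC$ forces $\CC gb = \CC b$, the hypothesis $x \models p|\CC gb$ is automatic, and (i) yields $g + y = g + bx \models q \odot p$. Hence $\tp(g + y/\CC) = q \odot p$ for every $g \in \CC$, which is the required additive invariance and finishes (ii).
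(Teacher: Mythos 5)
Your proof is correct and follows essentially the same route as the paper's: both rewrite $a+bx$ as $b(b^{-1}a+x)$ and reduce (i) to showing that translation by $b^{-1}a\in\bar\CC$ preserves the relevant restriction of $p$, with (ii) obtained from (i) by taking $a:=c\in\CC$ and using invariance of $q\odot p$ over a small model. You make explicit the one step the paper leaves implicit (extending additive invariance from $\Ga(\CC)$ to $\Ga(\bar\CC)$); just note that since $p$ is only assumed invariant, not definable, this transfer should be phrased via the $M$-invariant defining schema of $p$ together with saturation of $\CC$ over $M$, rather than via ``$\phi$-definitions''.
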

\begin{proof}
	(i) We have $a + bx = b(b^{-1}a + x)$. Since $x \models p|\CC ab$, also $b^{-1}a + x \models p|\CC ab$ and so $b(b^{-1}a + x) \models \tp(b/ \CC) \odot p = q \odot p$.
	(ii) Let $(x, y) \models q \otimes p$ and $c \in \CC$. Then by (i), $c + xy \models p \odot q$.
\end{proof}
\begin{corollary}
	Let $q\in S_{\Gm}(\CC), p \in S_{\Ga}(\CC)$ be global strong f-generics in the sense of $\Gm$ and $\Gm$ respectively. Then $q * p$ is a global strong f-generics with respect to both group actions.
\end{corollary}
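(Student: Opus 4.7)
The plan is to verify, separately, that $q \odot p$ (which is what the statement refers to as $q * p$, since $q$ and $p$ live in different group structures) is a strong f-generic for both the additive and the multiplicative action, and to handle the ``strong'' part uniformly by invariance over a common small model.

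For the additive side, the work is already done by Lemma \ref{lem:good type}(ii): as a strong multiplicative f-generic, $q$ is in particular $M_0$-invariant for some small $M_0$, so the lemma applies with this $q$ and yields that $q \odot p$ is a strong additive f-generic. (For explicit strongness over a fixed model, I would pick $M_0$ small so that both $p$ and $q$ are invariant over $M_0$; then $q \otimes p$ is invariant over $M_0$ by the fact cited in Section~1.3, and since multiplication is $\emptyset$-definable, $q \odot p$ is invariant over $M_0$ as well.)

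For the multiplicative side, I will show directly that $q \odot p$ is stabilized by every element of $\Gm(\CC)$, which by the characterization of f-generic types in definably amenable NIP groups forces $\Stab(q \odot p) \supseteq \Gm$, hence equality, hence f-genericness. Fix $c \in \CC^\times$ and let $(x,y) \models q \otimes p$, so that $xy \models q \odot p$. Write $c \cdot (xy) = (cx) \cdot y$. Since $q$ is a strong multiplicative f-generic, it is multiplicatively invariant, so $cx \models q$. Moreover, $y \models p \mid \CC x$ and $cx \in \dcl(\CC x)$, so $y \models p \mid \CC(cx)$. Therefore $(cx, y) \models q \otimes p$, and consequently $c \cdot (xy) = (cx) y \models q \odot p$. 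This shows $c \cdot (q \odot p) = q \odot p$.

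There is essentially no obstacle beyond untangling notation: the corollary is a direct packaging of Lemma \ref{lem:good type}(ii) with the symmetric multiplicative computation, which in turn is just the observation that multiplication by $c \in \CC^\times$ commutes with the $\otimes$-construction when the first factor is multiplicatively invariant. The main thing to be careful about is that ``strong f-generic'' requires invariance over a common small model, which is why one needs to combine the invariance witnesses of $p$ and $q$ before invoking Fact~\ref{fact:cs fundamental} or the additivity/multiplicativity arguments.
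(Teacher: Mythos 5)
Your additive half is fine and is exactly the paper's argument (Lemma \ref{lem:good type}(ii) plus invariance of $q \otimes p$ over a common small model). The multiplicative half, however, contains a genuine error. You assert that since $q$ is a strong multiplicative f-generic it is ``multiplicatively invariant'', so that $cx \models q$ for every $c \in \CC^\times$. That is false in general: by the characterization of f-generics in definably amenable NIP groups, $\Stab(q) = \GmT$, and $q$ is fixed by multiplication by $c$ only for $c \in \GmT$. The case $\GmT \neq \Gm$ is precisely the situation the paper cares about (it is the hypothesis of Theorem \ref{thm:counterexample}), so you cannot assume $q$ is fixed by all of $\Gm$. In fact your conclusion $\Stab(q \odot p) = \Gm$ would, when $\GmT \neq \Gm$, contradict the very characterization you invoke, since an f-generic must have stabilizer exactly $G^{00}$.

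The computation you perform is nonetheless the right one, and it proves the correct identity: since $y \models p|\CC x = p|\CC(cx)$ and $c(xy) = (cx)y$, one gets $c \cdot (q \odot p) = (c \cdot q) \odot p$ for every $c \in \Gm$. The conclusion should then be drawn via criterion (iii) of the f-generic characterization rather than (ii): the $\Gm$-orbit of $q \odot p$ is the image of the $\Gm$-orbit of $q$ under the map $(-) \odot p$, hence is bounded because $q$ is f-generic. A bounded orbit gives f-genericity, and invariance over a small model gives strongness. This is exactly the paper's one-line argument ($S_{\Gm}(\CC) \cdot q * p = (S_{\Gm}(\CC) \cdot q) * p$), so once the false invariance claim is replaced by the bounded-orbit argument, your proof coincides with the paper's.
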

\begin{proof}
	By Lemma \ref{lem:good type}(ii), $q * p$ is a strong f-generic of $\Ga$. It is a strong f-generic of $\Gm$ since $S_{\Gm}(\CC) \cdot q * p = (S_{\Gm}(\CC) \cdot q) * p$.
\end{proof}

\begin{remark}
	Note that the group $\Aff = \Gm \ltimes \Ga$ of affine transformations of the field is solvable, hence definably amenable. A strong f-generic $p$ of $\Aff$ is stabilized by $\Aff^{00} = \GmT \ltimes \GaT$. $\Aff$ acts on $X = \CC$ transitively, where the action restricted to $\Gm$ and $\Ga$ is the action by multiplication and by addition, respectively. We may identify $X$ with $\Aff/H$, where $H = \Stab(x)$ for any $x \in \CC$. Then $p/H$ is invariant and stabilized by both $\GmT$ and $\GaT$, hence a (strong) f-generic in the sense of both addition and multiplication.
\end{remark}

\subsection{M\"obius transformations and $\SL_2$}

Recall that the group $\GL_2$ acts transitively on the projective line $\PP_1$ via M\"obius transformations. We identify the projective line $\PP_1$ with the field extended by a point at infinity, namely $\PP_1 = \CC \cup \{\infty\}$. Then, the action is defined as follows:
\[\QM{a}{b}{c}{d} \cdot_1 x = \begin{cases} \frac{ax+b}{cx+d} &\mbox{if } x \neq \infty, \\
\frac{a}{c} & \mbox{otherwise}. \end{cases}
\]
Above, should the denominator of the right-hand side (in either case) be equal to $0$, the result is interpreted as $\infty$. The action can be restricted to the group $\SL_2 < \GL_2$. This restricted action is still transitive.

Furthermore, consider the subgroup $H < \SL_2$ consisting of matrices of the form $\QM{1}{a}{0}{1}$ with $a \in \CC$. Then $H$ is naturally isomorphic to the group $\Ga$. The action $\cdot_1$ restricted to $H$ has two orbits: $\CC$ and $\{\infty\}$. The action of $H$ on $\CC$, under the identification $\Ga \cong H$, is just $\Ga$ acting on $\CC$ by translations.

We now consider the flow over the field $K$. The action $\cdot_1$ of $\SL_2(K)$ on $\PP_1(K)$ extends to the flow $(\SL_2(K), S_{\PP_1}(K))$ and can also be lifted to the semigroup action $*_1$ of $S_{\SL_2}(K)$ on $S_{\PP_1}(K)$.

With the notation as above:

\begin{lemma}
	\label{lem:mobius add invariant}
	Suppose that $I \subset S_{\PP_1}(K)$ is a minimal subflow in the sense of the action of $\SL_2(K)$ by M\"obius transformations. Then $I \subset S(K)$ and there is $p \in I$ invariant under addition.
\end{lemma}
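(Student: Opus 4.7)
The plan is to prove the two claims in sequence: first $I \subset S(K)$ by a minimality argument that rules out $\tp(\infty/K) \in I$, and then the existence of an additively invariant $p \in I$ by applying the definable extreme amenability of $\Ga$ (Lemma \ref{lem:additive is dea}) to the $H(K)$-action on $I$. I identify $S_{\PP_1}(K)$ with $S_{\PP_1}(K^{ext})$ as in the preliminaries and use the clopen decomposition $S_{\PP_1}(K) = S(K) \sqcup \{\tp(\infty/K)\}$, valid because $\infty$ is isolated in $\PP_1$ by the formula $x = \infty$.

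For the first claim I argue by contradiction: suppose $\tp(\infty/K) \in I$. Since $\SL_2(K)$ acts transitively on $\PP_1(K)$, the $*_1$-orbit of $\tp(\infty/K)$ is exactly the set of realized types $\{\tp(y/K) : y \in \PP_1(K)\}$, which is dense in $S_{\PP_1}(K)$, so minimality of $I$ forces $I = S_{\PP_1}(K)$. To derive a contradiction, pick a non-realized $p \in I$ concentrated on $\CC$ (say $\tp(a/K)$ for $a \in \CC \setminus K$). For $g = \QMs{a'}{b'}{c'}{d'} \in \SL_2(K)$, the identity $g \cdot_1 a = \infty$ would require $c'a + d' = 0$, forcing $a = -d'/c' \in K$ when $c' \neq 0$, a contradiction; while $c' = 0$ keeps $g(a)$ affine in $a$ and hence in $\CC$. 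Thus $\SL_2(K) *_1 p \subseteq S(K)$, a proper closed $\SL_2(K)$-invariant subset of $I$, contradicting minimality.

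For the second claim I use the subgroup $H < \SL_2$ of unipotent upper-triangular matrices, identified with $\Ga$ so that $H$ acts on $\CC \subset \PP_1$ by addition. With $I \subset S(K)$ established, the $H(K)$-action on $I$ is precisely translation, and $I$ is a compact $H(K)$-flow. By Zorn, $I$ contains a minimal $H(K)$-subflow $J$. Lemma \ref{lem:additive is dea} gives $\Ga^{00} = \Ga$; by the characterization of f-generics in \cite{CS}, every global additive f-generic is then $\Ga(\CC)$-invariant, and by Fact \ref{fact:cs fundamental}(iii) every almost periodic of the universal $\Ga(K)$-flow $S_\Ga(K^{ext})$ is the $K^{ext}$-restriction of such an f-generic, hence $\Ga(K)$-fixed. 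The universal property of $S_H(K^{ext})$ as the universal point-transitive externally definable $H(K)$-flow yields an $H(K)$-equivariant continuous surjection onto the point-transitive flow $J$; the image of an almost periodic of $S_H(K^{ext})$ is then an $H(K)$-fixed point of $J$, so by minimality $J = \{p\}$. The global heir $p|\CC$ is $\Ga(\CC)$-invariant, giving the desired additive invariance of $p$.

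The main technical obstacle I anticipate is the last step: promoting $\Ga^{00} = \Ga$ from a statement about fixed points in $S_\Ga(K^{ext})$ to a fixed point inside the externally definable subflow $I$. This rests on the universal property of $S_H(K^{ext})$ and the transfer between almost periodics and restrictions of f-generics supplied by Fact \ref{fact:cs fundamental}(iii). The remaining ingredients — the density-based contradiction ruling out $\tp(\infty/K) \in I$ and the upgrade from $H(K)$-fixedness of $p$ to $\Ga(\CC)$-invariance of its heir — are routine.
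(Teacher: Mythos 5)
Your proof is correct and takes essentially the same route as the paper: you split off the clopen algebraic type $\tp(\infty/K)$ (your density argument is exactly the reason the paper can dismiss it in one line as algebraic, hence never almost periodic), then observe that $I$ is an additive subflow and combine $\GaT = \Ga$ (Lemma \ref{lem:additive is dea}) with Fact \ref{fact:cs fundamental}(iii) and the characterization $\Stab(p)=G^{00}$ to upgrade an additive almost periodic to a fixed point, just as the paper leaves implicit. The only difference is your detour through the universal property of $S_H(K^{ext})$: since the action of $H$ on $\CC$ is precisely the regular action of $\Ga$ on itself, $I$ is literally a subflow of the universal additive flow $S_{\Ga}(K^{ext})$, so the equivariant surjection onto $J$ can be taken to be the inclusion and this step is redundant (though harmless).
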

\begin{proof}
	We have $S_{\PP_1}(K) = S(K) \cup \{\tp(\infty/K)\}$ and since $\tp(\infty/K)$ is algebraic, it is omitted by $I$. Hence $I$ is a closed subset of $S(K)$ that is invariant under the action of $\SL_2(K)$, and therefore under the action of $H(K)$ (that is, invariant under addition). Thus $I$ can be considered as a subflow of $(\Ga(K), S(K))$ and hence it contains an almost periodic in the sense of addition.
\end{proof}

Now we move to the analysis of the dynamics of the group $\SL_2$. Recall that for any field $K$, $\SL_2(K)$ does not have infinite, proper normal subgroups. Hence for any infinite NIP field, $\SL_2^{000} =\SL_2$. So for any such field, the conjecture asserts that the Ellis group of the universal flow of $\SL_2$ over $K$ is trivial. We will show that this is the case only if $\GmT = \Gm$. Our main statement is rather modest:
\begin{proposition}
	\label{prop:size of ellis group}
	Suppose that $K$ is a NIP field with all types over $K$ definable. Then the Ellis group of the universal flow of $\SL_2$ has the size at least $\Gm/\GmT$.
\end{proposition}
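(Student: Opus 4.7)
\smallskip

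The plan is to exploit the $H(K)$-invariant almost periodic type on $\PP_1$ provided by Lemma \ref{lem:mobius add invariant}, lift it canonically to an almost periodic type on $\SL_2$, and then translate by the diagonal torus $T\cong\Gm$ to manufacture distinct elements of the Ellis group indexed essentially by the Ellis group of the multiplicative group. The two main ingredients will be the definable extreme amenability of $\Ga$ (Lemma \ref{lem:additive is dea}) and the Chernikov–Simon description of the Ellis group of the $\Gm(K)$-flow (Fact \ref{fact:cs fundamental}(iv)), which has size $|\Gm/\GmT|$.

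First I would apply Lemma \ref{lem:mobius add invariant} to pick a minimal $\SL_2(K)$-subflow $I\subseteq S_{\PP_1}(K)$ containing an $H(K)$-invariant type $p$. The $\SL_2$-equivariant map $\pi^\ast\colon S_{\SL_2}(K)\to S_{\PP_1}(K)$, $\tp(g/K)\mapsto \tp(g\cdot_1\infty/K)$, is surjective, so by Zorn's lemma I can choose a minimal $\SL_2(K)$-subflow $J\subseteq (\pi^\ast)^{-1}(I)$ with $\pi^\ast(J)=I$. The fibre $(\pi^\ast)^{-1}(p)\cap J$ is non-empty, closed and $H(K)$-invariant (since $H(K)\subseteq\Stab(p)$); by definable extreme amenability of $H\cong\Ga$, this $H(K)$-flow has a fixed point $\tilde p\in J$. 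Since $\tilde p$ lies in the minimal $\SL_2(K)$-subflow $J$, there is an idempotent $u\in J$ with $u\ast\tilde p=\tilde p$, and the Ellis group of the universal flow is $u\ast J$.

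Next I would use the torus embedding $T=\Gm\hookrightarrow\SL_2$, $b\mapsto t(b):=\QMs{b}{0}{0}{b^{-1}}$, which induces a continuous semigroup embedding $\iota\colon S_{\Gm}(K)\hookrightarrow S_{\SL_2}(K)$. Define
\[\Phi\colon S_{\Gm}(K)\longrightarrow u\ast J,\qquad q\longmapsto u\ast\iota(q)\ast\tilde p.\]
Because $J$ is a left ideal of $S_{\SL_2}(K)$ and $u\ast J$ is a group, $\Phi$ is well-defined. Restricting $\Phi$ to a minimal $\Gm(K)$-subflow of $S_{\Gm}(K)$ together with a compatible idempotent, the image of the ideal subgroup of that flow under $\Phi$ is a subset of $u\ast J$ of cardinality at most $|\Gm/\GmT|$ (by Fact \ref{fact:cs fundamental}(iv) for $\Gm$), and the goal is to show this bound is attained. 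The distinctness argument is concrete: for $g\models\tilde p$ and $b\models q|Kg$ one has $t(b)g=\QMs{bg_{11}}{bg_{12}}{b^{-1}g_{21}}{b^{-1}g_{22}}$, so that $\iota(q)\ast\tilde p$ records the $\Gm$-scaling type of the top row of a realisation of $\tilde p$. Since the top row of $g\in\SL_2$ is non-zero (indeed $\tilde p$ projects through a \emph{non-algebraic} $p$), distinct cosets of $\GmT$ give rise to distinct global types of the top row, hence distinct elements of $u\ast J$.

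The main obstacle is the interaction of the two Ellis semigroup structures and the corresponding idempotents: the idempotent $u$ comes from the minimal $\SL_2(K)$-flow $J$, while the Chernikov–Simon isomorphism for $\Gm$ uses an idempotent $v$ native to a minimal $\Gm(K)$-subflow. Verifying that $\Phi$ descends to an injection of the $\Gm$-Ellis group into $u\ast J$ amounts to checking that left multiplication by $u$ does not collapse the $\Gm/\GmT$-worth of scalings surviving in the top row, and this is where the hypothesis that all types over $K$ are definable is essential (it guarantees that the liftings $\iota(q)\ast\tilde p$ are themselves determined by definable data over $K$, so that the ``top-row invariant'' is a genuine invariant of the type rather than just of a realisation). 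Once this compatibility is in place the size bound $|u\ast J|\geq|\Gm/\GmT|$ follows immediately.
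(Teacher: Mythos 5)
Your setup is sound and you have correctly located the crux, but the proof has a genuine gap at exactly the point you flag, and the flag is not a fix: nothing in your argument controls left multiplication by the idempotent $u$. The types $\iota(q)\ast\tilde p$ do distinguish cosets of $\GmT$ (the $\GmT$-coset of a nonzero top-row entry is determined by the type over $K$ and scales by $b/\GmT$), but these types live in $J$, not in $u\ast J$, and the map $x\mapsto u\ast x$ on $J$ is far from injective: it retracts all of $J$ onto a single ideal subgroup, and a priori it can identify your $|\Gm/\GmT|$-many types. Your proposed remedy --- that definability of all types over $K$ makes the top-row coset a ``genuine invariant'' --- is beside the point, since well-definedness of the invariant on types was never the issue; the issue is that $u$ is an abstract limit of translates $\tp(A/K)$, $A\in\SL_2(K)$, so the top row of a realisation of $u\ast x$ mixes both rows of a realisation of $x$ with unknown coefficients, and the invariant of $u\ast\iota(q)\ast\tilde p$ cannot be computed from that of $\iota(q)\ast\tilde p$. (A smaller slip in the same computation: with the paper's convention $q\ast p=\tp(a\cdot b/K)$ for $a\models q$, $b\models p|Ka$, a realisation of $\iota(q)\ast\tilde p$ is $t(b)g$ with $g\models\tilde p|Kb$, not with $b\models q|Kg$; the heir/coheir direction matters throughout. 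Also, the appeal to Fact \ref{fact:cs fundamental}(iv) for an upper bound on the image is unnecessary, as only the lower bound is needed.)

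The paper closes exactly this gap by arranging that the left factor in the sandwich is an explicitly understood almost periodic type rather than an abstract idempotent. Via Lemma \ref{lem:zariski dimension} and Corollary \ref{cor:max dim aper} it chooses an almost periodic $p$ of full Zariski dimension $3$, so that every type in the minimal flow $I=S_G(K)\ast p$ has all matrix entries nonzero and admits the unique decomposition $\QMs{1}{\zeta}{0}{1}\QMs{\delta^{-1}}{0}{\gamma}{\delta}$; Lemma \ref{lem:good aper} (which is where Lemma \ref{lem:mobius add invariant} is actually used, applied to the unitriangular coordinate, which transforms by the M\"obius action) upgrades $p$ so that $\tp(\zeta/K)$ is additively invariant. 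It then works inside $p\ast S_D(K)\ast p\subseteq p\ast I$, which is an ideal subgroup because $p\ast I=u\ast I$; since the leftmost factor is the concrete $p$, the product $p\ast\tp\bigl(\QMs{a}{0}{0}{a^{-1}}/K\bigr)\ast p$ can be multiplied out explicitly, and heir bookkeeping together with additive invariance of $\zeta$ yields $\pi(p_a)=a/\GmT\cdot(\zeta\gamma\delta)/\GmT$, so the invariant moves freely over $\Gm/\GmT$. If you want to salvage your architecture, replace $u\ast\iota(q)\ast\tilde p$ by $\tilde p\ast\iota(q)\ast\tilde p$, which lies in the ideal subgroup $\tilde p\ast J$ without any appeal to $u$ --- but you would still need structural control of $\tilde p$ of the kind the dimension and decomposition lemmas provide in order to carry out the computation; your $\tilde p$, produced by an abstract fixed-point argument in a fibre, comes with no such normal form.
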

As a corollary, we obtain Theorem \ref{thm:counterexample}

We will take certain shortcuts in proving the proposition. We will not give a precise description of (some) minimal subflow of the universal flow of $\SL_2$ over $K$, but an approximation sufficient to determine the size of its ideal subgroups. We also do not describe the group structure of the ideal subgroups. At the end of the section we will discuss certain cases of fields with $\GmT \lneq \Gm$, specifically for valued fields and dp-minimal fields.

We begin with a general fact about almost periodic types of an algebraic group. Let $K$ be any NIP field (possibly with additional structure) and $\CC \succ K$ a monster model. Suppose that $G$ is an algebraic group over $K$. For any $x \in G$ we define $\dim_K(x)$ as the \textit{algebraic (Zariski) dimension} of $x$ over $K$, namely the minimum algebraic dimension over $K$ of an algebraic (over $K$) variety $X \subseteq G$ containing $x$. Then $\dim_K(x)$ depends only on $\tp(x/K)$. Hence for $p \in S_G(K)$ we define $\dim_K(p) = \dim_K(x)$ for any $x \models p$. 
Now assume that all types over $K$ are definable. With the notation as above:

\begin{lemma}
	\label{lem:zariski dimension}
	For any $q, p \in S_G(K)$, we have $\dim_K(p) \leq \dim_K(q * p)$.
\end{lemma}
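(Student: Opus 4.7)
The plan is to realize the product $q * p$ explicitly and then use the heir property of $p \mid Ka$ to descend to a $K$-rational element that links the Zariski closures of the factors. Fix $a \models q$ and $b \models p \mid Ka$, so that $ab \models q * p$. Let $W \subseteq G$ be the Zariski closure over $K$ of $ab$ and $V \subseteq G$ the Zariski closure over $K$ of $b$, giving $\dim W = \dim_K(q * p)$ and $\dim V = \dim_K(p)$. It suffices to find $a_0 \in G(K)$ with $V \subseteq a_0^{-1}W$: since left translation by a $K$-rational point is a $K$-definable isomorphism of $G$, it sends $K$-varieties to $K$-varieties of the same dimension, whence $\dim V \leq \dim(a_0^{-1}W) = \dim W$.

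To produce such an $a_0$, I would invoke the heir property. The standing hypothesis that all types over $K$ are definable makes $p \in S_G(K)$ a definable type, so $p \mid Ka$ is its unique heir. Let $\phi(x;y)$ be the $K$-formula expressing $yx \in W$. Since $ab \in W$, we have $\phi(x;a) \in p \mid Ka$, and the heir property then provides some $a_0 \in K$ with $\phi(x;a_0) \in p$. Equivalently, $a_0^{-1}W \in p$, so the $K$-variety $a_0^{-1}W$ contains every realization of $p$, and in particular contains the Zariski closure $V$ of $b$ over $K$.

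The only non-formal step is the passage from the parameter $a \in \CC$ to a witness $a_0 \in K$, which is exactly what the heir property supplies under the definability hypothesis. Everything else reduces to standard facts about algebraic groups over $K$: that the multiplication map is $K$-definable, that translation by a $K$-point preserves $K$-varieties and their dimension, and that the Zariski closure over $K$ of a realization of a type is the intersection of all $K$-varieties lying in the type. I do not expect any genuine obstacle; the main care is in choosing the formula $\phi$ so that the heir property is applied correctly.
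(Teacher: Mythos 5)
Your proof is correct and is essentially the paper's argument: both descend from the parameter $a$ to a $K$-rational point $a_0$ so that the $K$-definable translate $a_0^{-1}W$ of the minimal variety for $q*p$ contains the realizations of $p$, and then compare dimensions using that translation by a $K$-point preserves $K$-varieties. The only cosmetic difference is that the paper phrases the descent via finite satisfiability of $\tp(a/Kb)$ in $K$ (the coheir side), while you invoke the heir property of $\tp(b/Ka)$ directly; these are dual formulations of the same step.
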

\begin{proof}
	Let $(g, h) \models q \otimes p$. Suppose that $\dim_K(q * p) \leq n$. Then there is a $K$-definable subvariety $X \subseteq G$ of dimension $n$ such that $g \cdot h \in X$. Since $\tp(g/Kh)$ is finitely satisfiable in $K$, there is some \mbox{$g_0 \in G(K)$} such that $g_0 \cdot h \in X$, i.e. $h \in g_0^{-1} \cdot X$. Hence $\dim_K(h) \leq n$ since the map \mbox{$x \mapsto g_0^{-1}x$} is birational.
\end{proof}
Furthermore:
\begin{corollary}
	\label{cor:max dim aper}
	The set
	\[J = \{p \in S_G(K) : \dim_K(p) = \dim_K(G)\}\]
	is a subflow of $S_G(K)$. In particular, there is an almost periodic type $p \in S_G(K)$ with $\dim_K(p) = \dim_K(G)$.
\end{corollary}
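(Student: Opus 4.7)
The plan is to verify the three defining properties of a subflow for $J$ -- nonemptiness, closedness, and $G(K)$-invariance -- and then invoke the standard fact that every nonempty subflow contains a minimal subflow, whose points are by definition almost periodic.

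I would handle closedness first: writing $J = \bigcap_V \bigl(S_G(K) \setminus [V]\bigr)$, where $V$ ranges over $K$-definable subvarieties of $G$ with $\dim_K V < \dim_K G$, each $[V]$ is clopen and so $J$ is an intersection of closed sets. Nonemptiness is a compactness argument: the partial type
\[\{G(x)\} \cup \{\neg V(x) : V \subsetneq G \text{ a } K\text{-definable subvariety with } \dim_K V < \dim_K G\}\]
is finitely consistent, since a finite union of $K$-subvarieties of $G$ of dimension strictly below $\dim_K G$ cannot cover $G$; any completion lies in $J$.

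For $G(K)$-invariance, fix $g \in G(K)$ and $p \in J$. The key observation is that because $g$ is $K$-rational, the realized type $q_g = \tp(g/K)$ satisfies $q_g * p = g \cdot p$: indeed $p|Kg = p$, so both sides equal $\tp(g \cdot b/K)$ for $b \models p$. Applying Lemma \ref{lem:zariski dimension} first to $(q_g, p)$ and then to $(q_{g^{-1}}, g \cdot p)$ yields
\[\dim_K(p) \leq \dim_K(g \cdot p) \leq \dim_K\bigl(g^{-1} \cdot (g \cdot p)\bigr) = \dim_K(p),\]
so $g \cdot p \in J$.

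The only (mild) obstacle is identifying $q_g * p$ with $g \cdot p$, but this is immediate from the definition of $*$ once one notes that $g \in K$. Everything else is a direct application of Lemma \ref{lem:zariski dimension}, and the ``in particular'' clause follows from Zorn's lemma producing a minimal subflow inside the nonempty subflow $J$.
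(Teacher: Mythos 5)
The proposal is correct and follows exactly the route the paper intends: the corollary is stated without proof as an immediate consequence of Lemma \ref{lem:zariski dimension}, and your verification of nonemptiness, closedness, and $G(K)$-invariance (via $q_g * p = g\cdot p$ for $g \in G(K)$ and the dimension inequality) together with the existence of a minimal subflow inside any nonempty subflow is precisely the argument being left implicit. Your sandwich with $g$ and $g^{-1}$ is slightly more than needed, since $\dim_K(g\cdot p) \leq \dim_K(G)$ holds trivially, but it is perfectly valid.
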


We now consider $G = \SL_2$ and its universal definable flow $(G(K), S_G(K))$. Let $p \in S_G(K)$ be a fixed almost periodic type given by Corollary $\ref{cor:max dim aper}$. Let $I = S_G(K) * p$ be the minimal subflow of $S_G(K)$ containing $p$. Then for every $p' \in I$ and every matrix $A \models p'$, we have $\dim_K(A) = \dim_K(\SL_2) = 3$.

Recall that a upper-unitriangular matrix is one of the form $\QMs{1}{x}{0}{1}$ for $x \in \Ga$, and an (invertible) lower-triangular one has the form $\QMs{\alpha}{0}{\gamma}{\delta}$ with $\gamma \in \Ga, \alpha, \delta \in \Gm$. Let us explicitly recall the following matrix decomposition (via Gaussian elimination):
\begin{fact}
	Let $A = \QM{a}{b}{c}{d} \in \GL_2$ such that $d \neq 0$. Then $A$ admits a unique decomposition into a product of a upper-unitriangular and lower-triangular matrices:
	\[\QM{a}{b}{c}{d} = \QM{1}{bd^{-1}}{0}{1} \QM{a - bcd^{-1}}{0}{c}{d}.\]
\end{fact}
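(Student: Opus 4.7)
The plan is to handle existence and uniqueness separately; both are routine computations in elementary linear algebra and there is no real obstacle to either.

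For existence, I would simply multiply out the two matrices on the right-hand side and verify that the product equals $\QM{a}{b}{c}{d}$ entry by entry. The nontrivial checks are that the $(1,1)$-entry becomes $(a - bcd^{-1}) + bd^{-1} \cdot c = a$ and the $(1,2)$-entry becomes $bd^{-1} \cdot d = b$, while the bottom row is copied verbatim. The hypothesis $d \neq 0$ is used precisely to make sense of $bd^{-1}$. Invertibility of the lower-triangular factor is then automatic, since its determinant is $(a - bcd^{-1}) \cdot d = ad - bc = \det A \neq 0$; in particular the diagonal entry $a - bcd^{-1}$ is nonzero, so the factor really is a lower-triangular element of $\GL_2$.

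For uniqueness, I would suppose that $A = U_1 L_1 = U_2 L_2$ with each $U_i$ upper-unitriangular and each $L_i \in \GL_2$ lower-triangular. Rearranging gives $U_2^{-1} U_1 = L_2 L_1^{-1}$. The left-hand side is upper-unitriangular, as the class of such matrices is closed under products and inverses; the right-hand side is lower-triangular for the analogous reason. A matrix that is simultaneously upper-unitriangular and lower-triangular has zero off-diagonal entries and $1$'s on the diagonal, so both sides equal the identity. This forces $U_1 = U_2$ and $L_1 = L_2$, as required.
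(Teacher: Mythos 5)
Your proof is correct and complete: the existence part is the direct entry-by-entry verification (with the determinant observation guaranteeing the lower-triangular factor is invertible), and the uniqueness part is the standard argument that a matrix which is both upper-unitriangular and lower-triangular must be the identity. The paper states this Fact without proof, attributing it to Gaussian elimination, so your write-up simply supplies the routine verification the paper takes for granted.
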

Any matrix admitting this decomposition can be (uniquely) written as $\QMs{1}{\zeta}{0}{1}\QMs{\delta^{-1}}{0}{\gamma}{\delta}$ with $\zeta, \gamma \in \Ga, \delta \in \Gm$. It is easy to see that if $A = \QMs{a}{b}{c}{d}$ has dimension $3$ over $K$, then $a,b,c,d \neq 0$. Consequently, for every $p' \in I$ and $A \models p'$, $A$ admits the decomposition.

We now consider the action of $S_G(K)$ on the almost periodic $p$.
\begin{lemma}
	\label{lem:good aper}
	There is an almost periodic $p' \in I$ realized by $\QMs{1}{\zeta'}{0}{1}\QMs{\delta'^{-1}}{0}{\gamma'}{\delta'}$ such that $\tp(\zeta'/K)$ is invariant under addition.
\end{lemma}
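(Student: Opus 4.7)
The plan is to use the M\"obius action of $\SL_2$ on $\PP_1$ to transport the additive invariance guaranteed by Lemma \ref{lem:mobius add invariant} back into the universal flow of $\SL_2(K)$. The key bridge is the direct computation that for $A = \QMs{1}{\zeta}{0}{1}\QMs{\delta^{-1}}{0}{\gamma}{\delta}$ one has $A \cdot_1 0 = \zeta$: the lower-triangular factor fixes $0 \in \PP_1$, and the upper-unitriangular factor then acts by $x \mapsto x + \zeta$. Thus the upper-unitriangular coordinate of the decomposition of $A$ is recovered as the M\"obius image of $0$.

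I would consider the map $\Phi \colon S_G(K) \to S_{\PP_1}(K)$ given by $\Phi(q) = q *_1 \tp(0/K)$, obtained by lifting the M\"obius action and evaluating at the $K$-rational point $0$. The general properties of liftings of definable group actions (Section 1.3) make $\Phi$ continuous and $G(K)$-equivariant, so $\Phi(I)$ is a non-empty, closed, $G(K)$-invariant subset of $S_{\PP_1}(K)$, i.e.\ a subflow. It therefore contains a minimal subflow $J$, and by Lemma \ref{lem:mobius add invariant} we may pick $p'' \in J \subseteq S(K)$ invariant under addition.

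Next, I would choose any $p' \in I$ with $\Phi(p') = p''$; such $p'$ exists because $p'' \in \Phi(I)$, and it is automatically almost periodic since $I$ is minimal (every point of a minimal subflow is almost periodic). By Lemma \ref{lem:zariski dimension} together with the choice of $p$, every element of $I$ has Zariski dimension $3$, hence by the Gaussian decomposition fact every realisation $A \models p'$ admits the form $A = \QMs{1}{\zeta'}{0}{1}\QMs{\delta'^{-1}}{0}{\gamma'}{\delta'}$ with $\zeta', \gamma' \in \Ga$ and $\delta' \in \Gm$. The computation from the first paragraph then gives $\tp(\zeta'/K) = \tp(A \cdot_1 0 / K) = \Phi(p') = p''$, which is invariant under addition, as required.

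The main obstacle is conceptual rather than technical: one must check that evaluating the lifted M\"obius action at $\tp(0/K)$ indeed yields a continuous equivariant map of $G(K)$-flows, and that pointwise this map really picks out the upper-unitriangular coordinate of the decomposition (rather than, say, a conjugate by some other $K$-rational base point). Both checks are essentially routine given the framework of Section 1.3 and the explicit matrix calculation. Once they are in place, the proof is simply \emph{push $I$ down to $\PP_1$ via $\Phi$, apply Lemma \ref{lem:mobius add invariant}, and lift back}.
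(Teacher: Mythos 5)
Your proposal is correct and is essentially the paper's own argument: both transport the additive invariance from Lemma \ref{lem:mobius add invariant} into $I$ via the upper-unitriangular coordinate, which intertwines $*$ on $I$ with $*_1$ on $S_{\PP_1}(K)$. The only (cosmetic) difference is that you identify this coordinate as the M\"obius image of $0$ and invoke equivariance of the lifted action, whereas the paper multiplies the matrices out explicitly to exhibit the coefficient $A \cdot_1 \zeta$ directly.
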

\begin{proof}
	Consider $q \in S_G(K)$. Let $A = \QMs{a}{b}{c}{d} \models q$ and $\QMs{1}{\zeta}{0}{1}\QMs{\delta^{-1}}{0}{\gamma}{\delta} \models p|Kabcd$. Then by a direct calculation:
		\begin{align*}
		\QM{a}{b}{c}{d} \cdot \QM{1}{\zeta}{0}{1} \cdot \QM{\delta^{-1}}{0}{\gamma}{\delta} & =
		\QM{1}{A \cdot_1 \zeta}{0}{1} \cdot \QM{(c\zeta+d)^{-1}}{0}{c}{c\zeta+d} \cdot \QM{\delta^{-1}}{0}{\gamma}{\delta}\\
		& = \QM{1}{A \cdot_1 \zeta}{0}{1} \cdot \QM{(c\zeta+d)^{-1}\delta^{-1}}{0}{\frac{c}{\delta}+(c\zeta+d) \gamma}{(c\zeta+d)\delta},
		\end{align*}
	where $\cdot_1$ is the action on $\Ga$ by the M\"obius transformations. The right hand side of the calculation is a realization of $q * p$. Its upper-unitriangular component has the coefficient $A \cdot_1 \zeta$, which realizes $q *_1 \tp(\zeta/K)$. As $q$ ranges over all $S_G(K)$, the set of types obtained this way is a flow in the sense of the action by M\"obius transformations. By Lemma \ref{lem:mobius add invariant}, such a flow contains a type invariant under addition. The conclusion follows.
\end{proof}

We will now assume that $p$ is such as stipulated by Lemma \ref{lem:good aper}.

\begin{fact}
	The set $p * I = p * S_G(K) * p$ is an ideal subgroup containing $p$.
\end{fact}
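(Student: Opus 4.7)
The statement is a purely semigroup-theoretic fact about almost periodic elements in a compact right-topological semigroup, so my plan is to derive it from the general Ellis decomposition of minimal ideals rather than from anything specific to $\SL_2$ or the field $K$.

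First, I would observe that the identity $p * S_G(K) * p = p * I$ reduces to the equality $S_G(K) * p = I$. Since right multiplication by $p$ is continuous in the Ellis semigroup $S_G(K)$, the set $S_G(K) * p$ is the orbit closure $\overline{G(K) \cdot p}$, and almost periodicity of $p$ gives that this orbit closure is the unique minimal subflow containing $p$, namely $I$.

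Next, I invoke the standard decomposition $I = \bigsqcup_{u \in J(I)} u * I$ recalled in the preliminaries, and pick the unique idempotent $u \in J(I)$ with $p \in u * I$. In the group $u * I$, $p$ has an inverse $p^{-1}$ with $p * p^{-1} = p^{-1} * p = u$ and $u * p = p * u = p$. The substantive step is then to prove $p * I = u * I$. For $\subseteq$, note $I$ is a left ideal of $S_G(K)$, so $p * I \subseteq I$; combining this with $u * p = p$ and associativity gives $p * I = u * (p * I) \subseteq u * I$. For $\supseteq$, any $v \in u * I \subseteq I$ may be rewritten as $v = u * v = (p * p^{-1}) * v = p * (p^{-1} * v)$, and $p^{-1} * v$ lies in $I$ because $p^{-1} \in S_G(K)$ and $I$ is a left ideal. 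Once $p * I = u * I$ is established, $p * I$ is manifestly an ideal subgroup, and $p = u * p \in u * I = p * I$ shows that it contains $p$.

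The only minor obstacle is keeping the sidedness conventions straight: right multiplication (not left) is continuous in $S_G(K)$, and one must check that $p^{-1} * v$ lies back in $I$, which is automatic because $I$ is a left ideal. No field-specific input, and in particular no use of the particular choice of almost periodic $p$ from Lemma \ref{lem:good aper}, is required here.
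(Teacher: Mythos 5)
Your proof is correct and follows essentially the same route as the paper: both identify the unique idempotent $u \in J(I)$ with $p \in u * I$ and use the group structure of $u * I$ (via $p * u = p$ and the invertibility of $p$ in that group) to conclude $p * I = u * I$. Your version merely spells out the two inclusions and the continuity argument for $S_G(K) * p = I$ that the paper leaves implicit.
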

\begin{proof}
	The follows from the general theory of compact semigroups. There is an idempotent $u \in I$ such that $p$ is the element of the group $u * I$ in which $u$ is the identity. Then $p * I = p * u * I = u * I$.
\end{proof}

We can now prove the main statement.
\begin{proof}[Proof of Proposition \ref{prop:size of ellis group}]
Consider the group $D < \SL_2$ of diagonal matrices. $D$ can be identified with $\Gm$ by the map $\Gm \ni a \mapsto \QMs{a}{0}{0}{a^{-1}} \in D$. The ideal subgroup $p * S_G(K) * p$ contains the subset $p * S_D(K) * p$. Define the map $\pi\colon I \to \Gm/\GmT$ the following way. If $p' \in I$ is realized by $\QMs{1}{\zeta'}{0}{1}\QMs{\delta'^{-1}}{0}{\gamma'}{\delta'}$, then $\pi(p') = \delta'/\GmT$. We will show that $\pi$ restricted to $p * S_D(K) * p$ is onto $\Gm/\GmT$.

Fix a realization $\QMs{1}{\zeta}{0}{1}\QMs{\delta^{-1}}{0}{\gamma}{\delta} \models p$. The type $\tp(\zeta/K)$ is invariant under addition. For $a \in \Gm$, write 
\[p_a = p * \tp\big(\QMs{a}{0}{0}{a^{-1}}/K\big) * p \in p * S_D(K) * p.\]

\begin{claim2}
	For all $a \in \Gm$, $\pi(p_a) = a/\GmT \cdot (\zeta\gamma\delta)/\GmT$.
\end{claim2}
\begin{innerproof}[Proof of Claim]
	Without loss of generality assume that $a$ is such that $\tp(a/K\zeta\gamma\delta)$ is the heir of $\tp(a/K)$ and take $\QMs{1}{\zeta'}{0}{1}\QMs{\delta'^{-1}}{0}{\gamma'}{\delta'} \models p|\zeta\gamma\delta a$. Then $p_a$ is realized by
	\begin{align*}
	\QM{1}{\zeta}{0}{1}\QM{\delta^{-1}}{0}{\gamma}{\delta} \cdot \QM{a}{0}{0}{a^{-1}} \cdot \QM{1}{\zeta'}{0}{1}\QM{\delta'^{-1}}{0}{\gamma'}{\delta'} = \\
	= \QM{1}{\frac{a \zeta' \left(\frac{1}{\delta }+\gamma  \zeta\right)+\frac{\delta  \zeta}{a}}{\frac{\delta }{a}+a \gamma  x'}}{0}{1} \cdot \QM{\frac{1}{\delta ' \left(\frac{\delta }{a}+a \gamma  \zeta'\right)}}{0}{\frac{a \gamma }{\delta '}+\gamma ' \left(\frac{\delta }{a}+a \gamma  \zeta'\right)}{\delta ' \left(\frac{\delta }{a}+a \gamma  \zeta'\right)}.
	\end{align*}
	Hence $\pi(p_a) = \left(\delta'\left(\frac{\delta }{a}+a \gamma  \zeta'\right)\right)/\GmT = \left(\frac{\delta }{a}+a \gamma  \zeta'\right)/\GmT \cdot \delta / \GmT$. Since $\tp(\zeta'/K\gamma\delta a)$ is the heir of $\tp(\zeta/K)$, we have $\frac{\delta }{a}+a \gamma \zeta' \equiv_K a \gamma \zeta'$. Consequently, $\pi(p_a) = \left(a \gamma  \zeta'\right)/\GmT \cdot \delta / \GmT = a/\GmT \cdot \left(\zeta\gamma\delta\right)/\GmT$ as claimed. \qedhere
\end{innerproof}
We finish by nothing that the coset $(\zeta\gamma\delta)/\GmT$ is fixed and determined by $p$. Hence as $a$ ranges over all cosets of $\GmT$, so does $\pi(p_a)$.
\end{proof}

We finish the section with a brief look at some cases when $\GmT$ is strictly smaller than $\Gm$. Suppose that $K$ is a valued field, i.e. it is equipped by some definable valuation $v \colon \Gm \to \Gamma$ with $\Gamma$ the value group. We implicitly assume $\Gamma$ to be the image of $v$. The preimage $v^{-1}[\Gamma^{00}]$ is an invariant subgroup of $\Gm$ with bounded index. Hence clearly:
\begin{fact}
	If $\GmT = \Gm$, then $\Gamma^{00} = \Gamma$.
\end{fact}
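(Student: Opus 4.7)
The plan is a short contrapositive manipulation using the universal property of $\GmT$ as the smallest type-definable bounded-index subgroup of $\Gm$. First I would observe that although the paragraph preceding the fact only records that $v^{-1}[\Gamma^{00}]$ is invariant of bounded index, in fact it is also type-definable: $\Gamma^{00}$ is type-definable by definition and $v$ is a definable function, so the preimage is type-definable. Its index in $\Gm$ is bounded because $v$ is surjective onto $\Gamma$, so $\Gm / v^{-1}[\Gamma^{00}]$ embeds into $\Gamma/\Gamma^{00}$, which has bounded size.

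Next I would invoke minimality: since $\GmT$ is the smallest type-definable subgroup of $\Gm$ of bounded index, we have $\GmT \subseteq v^{-1}[\Gamma^{00}]$, equivalently $v[\GmT] \subseteq \Gamma^{00}$.

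Finally I would conclude: if $\GmT = \Gm$, then
\[\Gamma \;=\; v[\Gm] \;=\; v[\GmT] \;\subseteq\; \Gamma^{00},\]
so $\Gamma^{00} = \Gamma$.

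There is no real obstacle here; the content of the fact is simply the functoriality of $(-)^{00}$ under the definable surjective homomorphism $v$. The only thing worth flagging carefully in the write-up is that one wants to use $\Gamma^{00}$ being type-definable (so the preimage is type-definable, not merely invariant), which lets us apply the universal property of $\GmT$ rather than of $\Gm^{000}$.
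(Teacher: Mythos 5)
Your proof is correct and follows essentially the same route as the paper, which justifies the statement by observing that $v^{-1}[\Gamma^{00}]$ is a bounded-index subgroup of $\Gm$ and invoking the universal property of the connected component. Your one refinement --- noting that this preimage is type-definable (not merely invariant, as the paper states), so that $\GmT$ itself must be contained in it without any detour through $\Gm^{000}$ and the identity $G^{000}=G^{00}$ for definably amenable groups --- is a minor but legitimate tightening of the same argument.
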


\begin{corollary}
	If $K$ is field with a discrete valuation $v \colon \Gm \to \Gamma$, then $\GmT \neq \Gm$.
\end{corollary}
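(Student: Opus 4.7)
The plan is to invoke the preceding Fact contrapositively: since $\GmT = \Gm$ forces $\Gamma^{00} = \Gamma$, it suffices to show $\Gamma^{00} \neq \Gamma$. For this I would produce a proper definable subgroup of $\Gamma$ of finite index, which is automatically a type-definable subgroup of bounded index containing $\Gamma^{00}$.

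My candidate is $2\Gamma$. First I would note that discreteness of $v$ on $K$ means the value group $v(K^\times)$ has a smallest positive element — a first-order property of $(K,v)$ that transfers to the monster, so $\Gamma$ has a smallest positive element $e$. Next, in $v(K^\times) \cong \mathbb{Z}$ the subgroup $2\mathbb{Z}$ has index two, with cosets represented by $0$ and $1$; this index-two statement, using $e$ as a parameter, is first-order over $K$, so transfers to $\CC$ and gives $[\Gamma : 2\Gamma] = 2$.

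Thus $2\Gamma$ is a proper definable subgroup of $\Gamma$ of finite index, so $\Gamma^{00} \leq 2\Gamma \lneq \Gamma$, and the Fact yields $\GmT \neq \Gm$. The argument is essentially a routine first-order transfer of an elementary property of $\mathbb{Z}$, so I don't expect a genuine obstacle; the only thing to be careful about is that the relevant statements really are first-order in the structure $(K,v)$, which they are via the definable valuation.
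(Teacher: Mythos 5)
Your proof is correct and follows essentially the same route as the paper: both arguments hinge on $2\Gamma$ being a proper definable subgroup of finite index, which transfers from $v(K^\times)\cong\mathbb{Z}$ by a first-order statement. The only cosmetic difference is that the paper pulls this subgroup back directly, observing that $v^{-1}[2\Gamma]$ is a proper definable finite-index subgroup of $\Gm$ (so it does not even need to invoke the preceding Fact), whereas you work inside $\Gamma$ and apply the Fact contrapositively.
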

\begin{proof}
	Clearly, $v^{-1}[2 \Gamma]$ is a proper subgroup of $\Gm$ with finite index.
\end{proof}
Note that the above case includes the $p$-adics and the field $\mathbb{C}((t))$.

We also consider dp-minimal fields, which we will study in more detail in the next section. For now, we recall:
\begin{fact}[\cite{Johnson}]
	If $K$ is dp-minimal, then $[\Gm:\mathbb{G}^n_m] < \infty$ for any $n > 0$.
\end{fact}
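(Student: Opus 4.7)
The plan is to reduce to cases via Johnson's structure theorem for infinite dp-minimal fields. In positive characteristic, such a field is algebraically closed, so $\Gm$ is divisible and the statement is trivial. In characteristic zero, either $K$ is algebraically or real closed (where $[\Gm:\Gm^n] \leq 2$), or Johnson's classification produces a nontrivial definable henselian valuation $v\colon \Gm \to \Gamma$ with residue field $k$, reducing the task to the henselian case.

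I would next exploit the two short exact sequences
\[1 \to \mathcal{O}_K^\times \to \Gm \to \Gamma \to 1, \qquad 1 \to 1+\mathfrak{m} \to \mathcal{O}_K^\times \to k^\times \to 1,\]
which break $[\Gm:\Gm^n]$ into three factors controlled respectively by $\Gamma/n\Gamma$, $k^\times/(k^\times)^n$, and $(1+\mathfrak{m})/(1+\mathfrak{m})^n$. For the first factor, $\Gamma$ is interpretable in $K$ and hence dp-minimal; a dp-minimal ordered abelian group satisfies $[\Gamma:n\Gamma]<\infty$ for every $n$, either by a known result on dp-minimal ordered abelian groups or via a direct ict-pattern argument (an infinite chain of $n$-cosets combined with the order would witness dp-rank $\geq 2$). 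For the second factor, $k$ is again a dp-minimal field, strictly simpler in its valuation-theoretic data, so the statement closes by induction on the residue tower. For the third, if $n$ is coprime to the residue characteristic, Hensel's lemma makes $1+\mathfrak{m}$ uniquely $n$-divisible; in mixed characteristic $(0,p)$ with $p \mid n$, I would filter by $1+\mathfrak{m}^j$ and apply Hensel together with finiteness of each graded quotient $\mathfrak{m}^j/\mathfrak{m}^{j+1}$, which is controlled by the additive group of the residue field.

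The main obstacle is really the preparatory work: invoking Johnson's classification in the correct form, and establishing that the value group and residue field of a dp-minimal henselian field are again dp-minimal so that the induction is well-founded. Granted those inputs, the three factor computations amount to standard local-field algebra, and no new model-theoretic idea beyond the classification itself seems necessary.
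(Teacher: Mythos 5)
The paper itself offers no proof here: the statement is quoted verbatim from Johnson, where it appears as one of the early lemmas proved \emph{directly} from dp-minimality and then \emph{used} as an input to the classification of dp-minimal fields (for instance, finiteness of $\Gamma/n\Gamma$ for the value group of the canonical valuation is deduced from finiteness of $\Gm/\mathbb{G}_m^n$, not the other way around). Your proposal inverts this logical order, deriving the lemma from a classification theorem whose proof depends on it; at best this is circular, and at worst it replaces a short argument by the entire content of the classification, so the closing remark that ``no new model-theoretic idea beyond the classification itself seems necessary'' understates what is being invoked. There is also a concretely false step: an infinite dp-minimal field of positive characteristic need not be algebraically closed. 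For example, the Hahn field $\mathbb{F}_p^{\mathrm{alg}}((t^{\mathbb{Z}[1/p]}))$ is perfect, Henselian and defectless, with algebraically closed residue field and an almost divisible, $p$-divisible value group, hence dp-minimal by the very classification you cite, yet $t$ has no $q$-th root for primes $q \neq p$. So the characteristic-$p$ case cannot be dismissed as trivial and must be folded into the valuation-theoretic analysis.

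Even granting the reduction to a Henselian valuation, the three-factor computation is not routine in the cases that actually arise. The filtration argument for $(1+\mathfrak{m})/(1+\mathfrak{m})^n$ when $p \mid n$ needs the graded pieces $\mathfrak{m}^j/\mathfrak{m}^{j+1}$ to be finite, which forces a finite residue field and a discrete value group; with an infinite residue field of characteristic $p$, or a dense value group (where the subgroups $1+\mathfrak{m}^j$ do not exhaust $1+\mathfrak{m}$), the argument collapses, and controlling the $p$-part of the one-units is exactly where the finite-ramification and $p$-divisibility clauses of the classification do their real work. Likewise, ``induction on the residue tower'' has no well-founded induction parameter as stated: you would need to know in advance that the residue field of the canonical valuation lands in a base case, which is again the hard part of the classification. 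The fix is to prove the statement the way the source does, as a self-contained consequence of dp-minimality of $K$, exploiting that $\mathbb{G}_m^n$ is a definable subgroup realized as the image of $\Gm$ under the definable $n$-to-one power map, and then read off the valuation-theoretic consequences (such as almost divisibility of $\Gamma$) rather than assuming them.
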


From this we immediately get:
\begin{corollary}
	Suppose that $K$ is dp-minimal and $\GmD = \Gm$. Then:
	\begin{enumerate}[(i)]
		\item $\Gm$ is divisible.
		\item If $K$ is a valued field with the value group $\Gamma$, then $\Gamma$ is divisible.
	\end{enumerate}
\end{corollary}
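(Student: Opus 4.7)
The plan is to reduce both statements directly to the displayed fact that $[\Gm : \Gm^n] < \infty$ for all $n > 0$ when $K$ is dp-minimal, where $\Gm^n$ denotes the image of the $n$-th power map $x \mapsto x^n$, which is an $\emptyset$-definable subgroup of $\Gm$.

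For (i), I would argue as follows. For each $n > 0$, $\Gm^n$ is a definable subgroup of $\Gm$, and by the cited fact it has finite index. The hypothesis $\GmD = \Gm$ means precisely that $\Gm$ has no proper $\emptyset$-definable subgroup of finite index (and in fact no proper type-definable such subgroup over any parameter set). In particular $\Gm^n = \Gm$ for every $n > 0$, which is the definition of divisibility of $\Gm$ as an abelian group.

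For (ii), note that the valuation $v \colon \Gm \to \Gamma$ is a surjective group homomorphism, so $\Gamma \cong \Gm/\ker(v)$. Since quotients of divisible abelian groups are divisible, part (i) immediately gives that $\Gamma$ is divisible. Concretely, given $\gamma \in \Gamma$ and $n > 0$, pick any $x \in \Gm$ with $v(x) = \gamma$; by (i) write $x = y^n$ for some $y \in \Gm$, and then $\gamma = n \cdot v(y)$.

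There is no real obstacle here; the whole argument is simply observing that the definable finite-index subgroups $\Gm^n$ witness divisibility as soon as one knows $\Gm$ has no proper definable subgroups of finite index.
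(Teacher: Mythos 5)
Your proof is correct and matches the paper's intent exactly: the paper states this corollary as an immediate consequence of Johnson's fact that $[\Gm:\mathbb{G}_m^n]<\infty$, and your argument (each $\mathbb{G}_m^n$ is a definable finite-index subgroup, hence equals $\Gm$ when $\GmD=\Gm$; then $\Gamma$ is divisible as a homomorphic image) is precisely the routine verification being left to the reader.
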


\section{Definable $f$-generics}
The following definition is due to Pillay and Yao:
\begin{definition}
	A definably amenable group $G$ has \emph{dfg} (\emph{definable f-generics}) if it admits a global definable f-generic type.
\end{definition}

Usefulness of the existence of a definable global f-generic can be motivated by the following result:
\begin{theorem}[\cite{YaoTri}, Lemma 2.3]
	Let $G$ be \emph{dfg} with a global definable f-generic $p$. Then the orbit of $p$ is closed, and $p$ is almost periodic.
\end{theorem}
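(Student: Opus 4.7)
Let $M$ be a small model over which $p$ is definable. By the characterization of f-generics recalled before Fact \ref{fact:cs fundamental}, $\Stab(p) = G^{00}$, so the $G$-orbit of $p$ is parameterised bijectively by $G/G^{00}$ via $g G^{00} \mapsto g \cdot p$. The plan is to show this orbit map is continuous when $G/G^{00}$ is equipped with the logic topology; compactness of $G/G^{00}$ will then force the orbit to be a compact, hence closed, subset of $S_G(\CC)$, after which almost periodicity follows from an essentially formal argument.

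\textbf{Continuity from definability of $p$.} Fix a formula $\phi(x,c)$ over $\CC$ and consider
\[ U_\phi \;:=\; \{g \in G : \phi(x,c) \in g \cdot p\} \;=\; \{g \in G : \phi(g \cdot x,c) \in p\}. \]
Since $p$ is definable over $M$, there is an $\LL(M)$-formula $d_p\phi(y,z)$ such that $\phi(g\cdot x,c) \in p$ iff $\models d_p\phi(g,c)$; so $U_\phi$ is definable in $\CC$. Moreover, $g \cdot p$ depends only on $gG^{00}$ (because $\Stab(p)=G^{00}$), so $U_\phi$ is a union of $G^{00}$-cosets; by the definition of the logic topology on $G/G^{00}$, the projections of both $U_\phi$ and its complement are closed, hence each is clopen. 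This is exactly continuity of $gG^{00}\mapsto g\cdot p$, since the preimage of the basic clopen $[\phi]\subseteq S_G(\CC)$ is clopen.

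\textbf{Closedness and minimality.} The orbit $G\cdot p$ is then the continuous image of the compact Hausdorff space $G/G^{00}$, so it is compact, and therefore closed in $S_G(\CC)$. For minimality: suppose $X \subseteq G\cdot p$ is a nonempty closed $G$-invariant subset; pick $q \in X$. Since $G\cdot p$ is a single $G$-orbit, $G\cdot q = G \cdot p$, whence $X \supseteq G\cdot q = G\cdot p$, forcing $X = G\cdot p$. Thus $G\cdot p$ itself is a minimal subflow, so $p$ is almost periodic. (The argument transfers verbatim to $S_G(M^{ext})$ with the $G(M)$-action, using $G(M)^{00} = G(M)\cap G^{00}$ and the corresponding logic topology.)

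\textbf{Main obstacle.} The entire argument hinges on definability of $p$, which upgrades $U_\phi$ from a merely type-definable / invariant set (as one would get from f-genericity alone) to a \emph{definable} set; only this upgrade produces an open, and in fact clopen, image in $G/G^{00}$, without which one cannot conclude continuity of the orbit map and hence closedness of the orbit. Everything else, including almost periodicity, is then an automatic consequence of the orbit being closed and, tautologically, a single orbit.
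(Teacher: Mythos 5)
The paper does not prove this statement; it is quoted verbatim as Lemma 2.3 of \cite{YaoTri}, so there is no internal proof to compare against. Your argument is correct and is essentially the standard one: definability of $p$ over $M$ makes each set $U_\phi=\{g\in G: \phi\in g\cdot p\}$ definable, and since $\Stab(p)=G^{00}$ (by the characterization preceding Fact \ref{fact:cs fundamental}, applicable because dfg groups are definably amenable by definition) it is a union of $G^{00}$-cosets, so its image in $G/G^{00}$ is clopen in the logic topology; continuity plus compactness of $G/G^{00}$ then yields a closed orbit, and a closed set that is a single orbit is trivially a minimal subflow, giving almost periodicity. You also correctly isolate where definability (as opposed to mere invariance) is indispensable. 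The one point you gloss over is the parenthetical transfer to $S_G(M^{ext})$: there the acting group is $G(M)$ rather than $G(\CC)$, so to conclude that the closure of the $G(M)$-orbit of $p\restriction M^{ext}$ is minimal you additionally need that $G(M)$ has dense image in $G/G^{00}$ (true because $G^{00}$ is type-definable over the model $M$, so every nonempty open subset of $G/G^{00}$ pulls back to a set containing a nonempty $M$-definable set and hence an $M$-point); also the expression $G(M)^{00}=G(M)\cap G^{00}$ is not quite meaningful as written. These are minor; the main claim about the global orbit is fully proved.
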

As a corollary, the orbit of $p$ consists of precisely one ideal subgroup.

The paper \cite{YaoTri} dealt with the question whether f-generic global types coincide with almost periodic types in the setup of definably amenable linear groups definable the field $\Qp$ of $p$-adic numbers. The results are done in the context of previous analyses by Pillay and Yao of groups definable in $\Qp$, particularly \emph{dfg} groups; and the work to classify groups of dimension $1$. The latter analysis makes use of the fact that groups of dimension $1$ are dp-minimal, and shows that dp-minimal groups have either \emph{dfg} or \emph{finitely satisfiable generics}.

In this section, we show that dp-minimality of a field $K$ is enough to show that various algebraic groups over $K$ are \emph{dfg}. 
Specifically, in the following subsection we show a number of basic structural results about definable (in arbitrary structure) groups with \emph{dfg}. In the next subsection, we prove:
\begin{theorem}
	\label{thm:dp minimal field}
	Let $K$ be a dp-minimal field such that all types over $K$ are definable. Then $\Gm$ has \emph{dfg}, witnessed by a definable global infinitesimal type.
\end{theorem}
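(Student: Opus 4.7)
The plan is to construct an explicit global definable type $p_0 \in S_{\Gm}(\CC)$ witnessing \emph{dfg}, namely the ``infinitesimal type at $1$'' coming from Johnson's canonical topology on a dp-minimal field. First I would invoke Johnson's work \cite{Johnson} to obtain a canonical $\emptyset$-definable V-topology $\tau$ on $K$ making it a topological field, and set
\[\mu := \bigcap \{\, U(\CC) : U \text{ a $K$-definable $\tau$-neighborhood of } 1 \in K^\times \,\}.\]
This is a $K$-type-definable subgroup of $\Gm$, and by dp-minimality it has bounded index, so $\GmT \leq \mu$.

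Next I would select $p_0$ as a completion of the partial type ``$x \in \mu,\ x \neq 1$'' to a complete type over $K$. Using Johnson's analysis of dp-minimal fields one shows that this partial type admits a canonical ``generic'' complete extension at $1$---essentially, a unique type of maximal dp-rank concentrated on $\mu$. The hypothesis that all types over $K$ are definable then promotes $p_0$ to a $K$-definable global type (via its heir), still denoted $p_0$.

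To verify f-genericity, by the characterization recalled in the preliminaries it suffices to show that $\Stab(p_0)$ has bounded index; in fact I would prove $\mu \leq \Stab(p_0)$. For each $a \in \mu$, left-multiplication $\lambda_a \colon x \mapsto ax$ is a $K$-definable bijection that sends the partial type defining $\mu \setminus \{1\}$ to itself (using that $\mu$ is a subgroup), and the canonicity of the ``generic'' completion then forces $\lambda_a \cdot p_0 = p_0$. Since $\GmT \leq \mu \leq \Stab(p_0)$, the type $p_0$ is f-generic, and by construction it is $K$-definable and concentrated infinitesimally close to $1$.

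The main obstacle is the middle step: producing a canonical complete extension of the partial type defining $\mu \setminus \{1\}$, one that is automatically fixed by multiplication by every $a \in \mu$. This is exactly where dp-minimality does real work, and it should come out of Johnson's structural results on generics in dp-minimal fields, perhaps together with the fact that in such fields the collapse of dp-rank around a point leaves a unique ``minimal'' type through the neighborhood filter. The remaining steps are essentially formal: the definability-of-types hypothesis plus Fact \ref{fact:cs fundamental} deliver the conclusion.
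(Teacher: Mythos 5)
There is a genuine gap, and it is located exactly where you centre your type. Your proposed witness is a completion of the partial type ``$x\in\mu$, $x\neq 1$'' with $\mu=1+I_K(\CC)$ the multiplicative infinitesimal neighbourhood of $1$. Such a type is \emph{not} a multiplicative f-generic: the quotient $\Gm/\mu$ surjects onto the value group of Johnson's canonical valuation on the monster (and onto the residue field units), which is unbounded, so $\mu$ does not have bounded index and the claim ``by dp-minimality it has bounded index, so $\GmT\leq\mu$'' is false -- in $\Qp$, for instance, the containment goes the other way. Concretely, if $p_0=\tp(1+\epsilon/\CC)$ with $\epsilon$ infinitesimal, then $a\cdot p_0$ concentrates on the coset $a+I_\CC(\CCC)$, and distinct $a\in\Gm(\CC)$ give distinct cosets, so the $\Gm(\CC)$-orbit of $p_0$ has size $|\CC|$; by the bounded-orbit characterization of f-genericity, $p_0$ fails. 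Proving $\mu\leq\Stab(p_0)$, even if achievable, therefore does not yield a bounded-index stabilizer.

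The paper's witness is instead a nonzero completion of the \emph{additive} infinitesimal type $I_\CC$, i.e.\ a type infinitesimally close to $0$, and the boundedness of its orbit comes from two facts of Johnson rather than from any stabilizer or index computation: $I_\CC(\CCC)$ is the maximal ideal of the valuation ring $\OCC$ and is invariant under multiplication by $\CC^{\times}$, so multiplication permutes the set of global infinitesimal types; and (Corollary 4.10 of \cite{Johnson}) there are only boundedly many global infinitesimal types. This makes your ``main obstacle'' -- producing a canonical completion fixed by multiplication -- both unsubstantiated and unnecessary: \emph{any} nonzero global infinitesimal type works, no uniqueness or maximal-rank selection is needed. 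Finally, definability is obtained not by abstract promotion but by observing that $I_K$ is $0$-definable as a partial type with global heir $I_\CC$, so for an infinitesimal $r\in S(K)$ (which is definable by hypothesis) the global heir $r|\CC$ is again infinitesimal, hence a $K$-definable f-generic.
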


Along with the structural results, the theorem shows that, in its setup, groups obtained from $\Ga$ and $\Gm$ via constructions such as semidirect products or group extensions are also \emph{dfg}. We will provide certain examples after the theorem is proved.

\subsection{Structural results}
In this subsection, we assume that $M$ is an arbitrary NIP structure with a monster model $\CC \succ M$. We will need the following:
\begin{lemma}
	\label{lem:definable extension}
	Assume that all types over $M$ are definable. Then any global $\Delta$-type $\pi$ definable over $M$ extends to a complete global type definable over $M$.
\end{lemma}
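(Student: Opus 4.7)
The plan is to reduce the problem to the completion problem inside $S(M)$ and then take a heir, using the standing hypothesis that all types over $M$ are definable. First, consider the restriction $\pi_0 := \pi \restriction M$; it is a consistent $\Delta$-type over $M$, so by Zorn's lemma it extends to some complete type $p_0 \in S(M)$. By the standing hypothesis $p_0$ is definable over $M$, hence admits a global heir $p := p_0 | \CC$, which is a complete global type definable over $M$.

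The substantive point is to verify that $p$ actually extends $\pi$ and not merely $\pi_0$. Fix $\phi(x;y) \in \Delta$. The $M$-definability of $\pi$ supplies an $M$-formula $\psi_\phi(y)$ with
\[ \phi(x,c) \in \pi \iff \models \psi_\phi(c) \quad \text{for every } c \in \CC, \]
and the $M$-definability of $p$ supplies an $M$-formula $\phi^*(y)$ with
\[ \phi(x,c) \in p \iff \models \phi^*(c) \quad \text{for every } c \in \CC. \]
For $c \in M$, both conditions collapse to $\phi(x,c) \in \pi_0$, so $M \models \psi_\phi(c) \leftrightarrow \phi^*(c)$ for every $c \in M$. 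Since $\psi_\phi$ and $\phi^*$ are both $M$-formulas and $M \preceq \CC$, this equivalence transfers to every $c \in \CC$, giving $\phi(x,c) \in \pi \iff \phi(x,c) \in p$, as required.

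The main (and essentially only) obstacle I anticipate is precisely this identification step: a priori, several global $\Delta$-types definable over $M$ could share the same restriction $\pi_0$, and the heir $p$ of the complete type $p_0$ only automatically extends $\pi_0$. The point is that $M$-definability of both $\pi$ and $p$ forces the relevant $\phi$-definitions to live over $M$, at which point agreement on $M$ propagates to all of $\CC$ by elementarity. Everything else is a routine invocation of the hypothesis and of the standard heir construction.
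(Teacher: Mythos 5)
Your proof is correct and follows the same route as the paper: restrict $\pi$ to $M$, complete it there, use the hypothesis to get definability of the completion, and take its global heir. The paper simply asserts that the heir extends $\pi$, whereas you supply the (correct) verification that the $\phi$-definitions of $\pi$ and of the heir agree on $M$ and hence on $\CC$ by elementarity; note only that your "both conditions collapse to $\phi(x,c)\in\pi_0$" uses that $\pi$ is a \emph{complete} $\Delta$-type (deciding every instance), which is the standard convention and the one in force here.
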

\begin{proof}
	Let $\pi' = \pi \restriction M$ and take any extension $\pi' \subseteq p' \in S(M)$. Then $p'$ is definable and its global heir extends $\pi$.
\end{proof}

The following is essentially Remark 2.20 from \cite{PYpadic}:
\begin{fact}
	Suppose that $G, H$ are $M$-definable groups and $f \colon G \to H$ is $M$-definable and onto. If $G$ has \emph{dfg}, then $H$ has \emph{dfg}.
\end{fact}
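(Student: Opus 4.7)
The plan is to push forward a witness of \emph{dfg} from $G$ to $H$. Let $p \in S_G(\CC)$ be a global $N$-definable f-generic of $G$, where $N \supseteq M$ is small, and fix some realization $a \models p$. Set $q := \tp(f(a)/\CC) \in S_H(\CC)$. (I read the statement as taking $f$ to be a definable group homomorphism, as in Remark 2.20 of \cite{PYpadic}; without this the assertion is not even well-posed.) The claim is that $q$ witnesses that $H$ has \emph{dfg}.

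First I would check that $q$ is $N$-definable. For any $\LL$-formula $\psi(y, \bar z)$ without parameters, the composition $\psi(f(x), \bar z)$ is an $\LL$-formula with parameters in $M \subseteq N$. Since $p$ is $N$-definable, the set $\{ \bar c \in \CC : \psi(f(x), \bar c) \in p \}$ is $N$-definable; but unwinding the definition of pushforward, this is exactly the set $\{ \bar c \in \CC : \psi(y, \bar c) \in q\}$. So the $\psi$-definition of $q$ is the $(\psi \circ f)$-definition of $p$, and $q$ is $N$-definable.

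Next I would verify that $q$ is f-generic by exhibiting a bounded $H$-orbit (and applying Fact 1.4). Since $f$ is a surjective homomorphism, for any $g \in G(\CC)$, a realization of $g \cdot p$ has the form $g \cdot a$ with $a \models p$, and $f(g \cdot a) = f(g) \cdot f(a)$ realizes $f(g) \cdot q$. Hence $f_*(g \cdot p) = f(g) \cdot q$. In particular, if $g \in \Stab(p)$ then $f(g) \in \Stab(q)$, so $f$ descends to a well-defined surjection $G/\Stab(p) \twoheadrightarrow H/\Stab(q)$. Since $p$ is f-generic, $|G/\Stab(p)|$ is bounded (Fact 1.4), and therefore so is $|H/\Stab(q)|$, i.e.\ the orbit $H \cdot q$ is bounded.

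Finally, to apply Fact 1.4 to $q$ I should observe that $H$ is itself definably amenable: pushing a $G$-invariant Keisler measure $\mu$ forward along $f$ via $(f_*\mu)(X) = \mu(f^{-1}(X))$ produces an $H$-invariant Keisler measure on $H$ (invariance uses that $f$ is a surjective homomorphism, so $f^{-1}(h^{-1} X) = g^{-1} f^{-1}(X)$ for any $g \in f^{-1}(h)$). Then by Fact 1.4 the type $q$ is f-generic, and by the previous paragraph it is definable, so $H$ has \emph{dfg}. I do not expect any genuine obstacle in this argument; the only care needed is keeping track of the small model of definability and using the homomorphism property to identify the pushforward with a translate.
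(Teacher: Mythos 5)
Your proof is correct: the paper itself gives no argument for this fact (it is attributed to Remark~2.20 of \cite{PYpadic}), and the pushforward argument you supply is the standard one. All three steps check out --- $N$-definability of $q := f_*p$ via the $(\psi \circ f)$-definitions of $p$, boundedness of the orbit $H \cdot q$ from surjectivity of $f$ and the identity $f_*(g \cdot p) = f(g) \cdot q$, and definable amenability of $H$ via the pushforward Keisler measure --- and your reading of $f$ as a surjective definable homomorphism is the intended one.
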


The following two results are analogous to similar ones for (classical and definable) amenable groups, with adapted proofs:

\begin{lemma}
	Suppose that $G$ is \emph{dfg} and $H \leq G$ with $G, H$ $M$-definable. If $G/H$ has a definable section, then $H$ is \emph{dfg}.
\end{lemma}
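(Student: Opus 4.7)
The plan is to push forward a global definable f-generic of $G$ along a definable retraction $G \to H$ built from the section. Let $s\colon G/H \to G$ be the given definable section; enlarging $M$ if necessary, we may assume $s$ is $M$-definable. Define
\[\phi\colon G \to H, \qquad \phi(g) := s(gH)^{-1}\, g.\]
Since $s(gH) \in gH$, $\phi$ indeed lands in $H$, and it is $M$-definable. Its key property is $H$-equivariance for the right translation action: for any $h \in H$ one has $ghH = gH$, so $s(ghH) = s(gH)$ and hence $\phi(gh) = s(gH)^{-1}(gh) = \phi(g)\cdot h$.

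Let $p \in S_G(\CC)$ be a global definable f-generic of $G$, existing since $G$ has \emph{dfg}, and put $q := \phi_\ast(p) = \tp(\phi(g)/\CC)$ for $g \models p$. Then $q$ is a global definable type on $H$: for any formula $\psi(y, \bar z)$ with $y$ in the sort of $H$, the $\psi$-definition of $q$ over $M$ is obtained from the $\psi(\phi(x), \bar z)$-definition of $p$. To conclude that $q$ is f-generic I would invoke the bounded-orbit criterion of Fact~\ref{fact:cs fundamental}. The right-equivariance of $\phi$ means that $\phi_\ast$ intertwines the right translation actions of $H$ on $S_G(\CC)$ and $S_H(\CC)$, so the right $H$-orbit of $q$ is the $\phi_\ast$-image of the right $H$-orbit of $p$, which is a subset of the right $G$-orbit of $p$ and hence bounded. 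To apply the criterion one also needs $H$ itself to be definably amenable; this follows by pulling any right-invariant Keisler measure $\mu$ on $G$ back along $\phi$ to $\mu_H(X) := \mu(\phi^{-1}(X))$, which is right-$H$-invariant by the same equivariance.

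The main obstacle to navigate is the left/right translation convention in the definition of f-generic. Since $G/H$ is formed of left cosets, $\phi$ is naturally right-equivariant and the construction produces a right f-generic of $H$ from a right f-generic of $G$. The remedy in the NIP definably amenable setting is the standard symmetry: inversion $g \mapsto g^{-1}$ is an $\emptyset$-definable bijection that sends global definable left f-generics to global definable right f-generics and conversely, so starting from a left f-generic of $G$ one produces a left f-generic of $H$ after applying $\phi_\ast$ to its inverse and inverting back. Hence $H$ admits a global definable f-generic, i.e.\ $H$ has \emph{dfg}.
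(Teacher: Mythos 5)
Your argument is correct, and it rests on the same underlying idea as the paper's proof: the definable section converts the coset decomposition of $G$ into a definable retraction onto $H$, and a definable f-generic of $G$ is pushed forward along it. The differences are in the bookkeeping, and they are worth comparing. The paper writes each element as $h\cdot x$ with $h\in H$ and $x$ in the section (equivalently, it works with a transversal for the right cosets), so its retraction is equivariant for \emph{left} translation by $H$; it can then check directly that the pushforward type is stabilized by $H^{00}\leq G^{00}$ and conclude via the stabilizer characterization, with no change of handedness needed. Your retraction $g\mapsto s(gH)^{-1}g$ is right-equivariant, which forces the inversion detour at the end; this is sound (inversion is an $\emptyset$-definable bijection interchanging left and right f-generics while preserving definability and boundedness of orbits), but it is an extra step that the paper's choice of decomposition avoids, and you should make explicit that the bounded-orbit criterion you invoke for right translation is the mirror image of the quoted fact, justified by that same symmetry. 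On the other hand, your write-up is more careful on a point the paper glosses over: to apply the Chernikov--Simon characterization to $H$ one needs $H$ to be definably amenable, which is not automatic for subgroups of definably amenable groups; your pushforward of a right-invariant Keisler measure along the equivariant retraction supplies exactly this. So the two proofs buy essentially the same thing, with the paper's left-handed decomposition giving a shorter argument and your version giving a more complete justification of the amenability hypothesis.
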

\begin{proof}
	Let $\pi \colon G \to G/H$ be the quotient map. Let $X$ be a definable section of $G/H$ and $p \in S_G(\CC)$ a definable f-generic. Define a set of formulas $r$ such that for $\phi \in \LL(\CC)$:
	\[\phi(x) \in r \iff (\phi(x) \cap H) \cdot X \in p.\]
	For any formula, we have $\left((\phi(x) \cap H) \cdot X\right)^c = (\lnot \phi(x) \cap H) \cdot X$, hence $r$ is a consistent and complete type in $S_H(M)$. It is clearly definable. It is also stabilized by $N^{00}$: consider $n \in N^{00} \subseteq G^{00}$. Then:
	\begin{align*}
	\phi(x) \in r \Rightarrow \left(\phi(x) \cap H\right) \cdot X \in p & \Rightarrow n\left(\left(\phi(x) \cap H\right) \cdot X\right) \in p \\
	& \Rightarrow \left(n\phi(x) \cdot X\right) \in p \Rightarrow n\phi(x) \in r.\qedhere
	\end{align*}
\end{proof}
Conversely:
\begin{proposition}
	\label{prop:dfg extension}
	Assume that all types over $M$ are definable. Suppose that $G$ is definably amenable and $N \lhd G$ with $G, N$ $M$-definable. If both $N$ and $G/N$ are \emph{dfg}, then $G$ is \emph{dfg}.
\end{proposition}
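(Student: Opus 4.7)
The plan is to combine the definable $f$-generics of $N$ and $G/N$ into one for $G$ by constructing its defining scheme. After possibly enlarging $M$ so that both witnesses of \emph{dfg} are defined over it (using Lemma \ref{lem:definable extension} and the hypothesis that all types over $M$ are definable), fix $M$-definable global $f$-generics $p \in S_N(\CC)$ and $q \in S_{G/N}(\CC)$. For each formula $\phi(x,c) \in \LL(\CC)$ with $x$ ranging over $G$, set
\[
D_\phi(c) := \{\, g \in G : \{n \in N : \phi(gn, c)\} \in p \,\},
\]
which is $M$-definable in $c$ through the defining scheme of $p$. Since $\Stab(p) = N^{00}$ and $N^{00}$ is normal in $G$ (being characteristic in $N \lhd G$), the set $D_\phi(c)$ is right-$N^{00}$-invariant. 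Its $N$-interior
\[
\hat D_\phi(c) := \{\, g \in G : gN \subseteq D_\phi(c) \,\}
\]
is right-$N$-invariant and $M$-definable, hence descends to an $M$-definable subset $\bar D_\phi(c) \subseteq G/N$. Define a partial global type $\pi_0$ by declaring $\phi(x,c) \in \pi_0$ iff $\bar D_\phi(c) \in q$.

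Next I would verify three properties of $\pi_0$. First, $M$-definability, by composing the defining schemes of $p$ and $q$. Second, consistency: given $\phi_1(x,c_1), \ldots, \phi_k(x,c_k) \in \pi_0$, the intersection $\bigcap_i \bar D_{\phi_i}(c_i)$ lies in $q$ and is nonempty; choosing $\bar g_0$ in it, a lift $g_0 \in G$, and then $n_0 \in N$ realizing $\bigcap_i \{n : \phi_i(g_0 n, c_i)\} \in p$, one obtains $g_0 n_0 \models \bigwedge_i \phi_i(x, c_i)$. Third, invariance of $\pi_0$ under the left action of $G^{00}$: a direct computation shows that translating $\phi(x,c)$ to $\phi(g_0 x, c)$ replaces $\bar D_\phi(c)$ by $\pi(g_0)^{-1} \cdot \bar D_\phi(c)$ in $G/N$, and for $g_0 \in G^{00}$ one has $\pi(g_0) \in (G/N)^{00}$, which stabilizes $q$.

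The main obstacle is to extend $\pi_0$ to a complete $M$-definable global type $p'$ while preserving $G^{00}$-invariance. Lemma \ref{lem:definable extension} delivers some $M$-definable complete extension, but a priori this can break invariance. I would handle this via a Zorn-type argument on $M$-definable $G^{00}$-invariant partial types containing $\pi_0$: by the $G^{00}$-equivariance already present, any undecided formula $\phi$ can be added to the partial type together with its full $G^{00}$-orbit while preserving both $M$-definability (invoking Lemma \ref{lem:definable extension} at each step to pick an $M$-definable completion of the augmented partial type) and consistency. The resulting complete $p'$ is $M$-definable with $G^{00} \subseteq \Stab(p')$, and by the characterization of $f$-generics in definably amenable NIP groups, $p'$ is a definable $f$-generic of $G$, witnessing that $G$ has \emph{dfg}.
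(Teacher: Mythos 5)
Your construction of the partial type $\pi_0$ is sound, and it is in fact close in spirit to what the paper builds: your set $D_\phi(c)$ is exactly the $\phi$-definition of the product of a type on $G$ with $p$. The genuine gap is the completion step, which you correctly identify as ``the main obstacle'' but then dispatch with an argument that does not work, and it is precisely where the difficulty of the proposition lives. Three concrete problems. First, consistency of the augmentation is not justified: given an undecided $\phi$, you must add the full $G^{00}$-orbit of either $\phi$ or $\neg\phi$, and nothing guarantees that at least one of the two augmented families is consistent with the current partial type; asserting that one of them is amounts to asserting that the current partial type extends to a $G^{00}$-invariant complete type, which is essentially the statement being proved. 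Second, once you adjoin the orbit $\{\phi(gx,c) : g \in G^{00}\}$, the resulting partial type is no longer definable over $M$ in the sense required by Lemma \ref{lem:definable extension}: the orbit is indexed by the type-definable group $G^{00}$, so for a fixed formula shape the set of parameters whose instances belong to the augmented type is at best type-definable, and the lemma (whose proof passes through heirs of $M$-definable partial types) cannot be invoked at the next step. Third, even the Zorn scaffolding fails: a union of an increasing chain of $M$-definable partial types need not be $M$-definable, so the poset of $M$-definable $G^{00}$-invariant partial types containing $\pi_0$ has no upper bounds for chains.

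The paper avoids invariant completion entirely, and the order of operations is the key difference: rather than pushing $p$'s defining scheme down to $G/N$ (which is lossy, because the $N$-interior operation leaves formulas undecided) and then completing, it first lifts $q' \in S_{G/N}(\CC)$ to a complete $M$-definable type $q \in S_G(\CC)$ extending the partial type $\pi^{-1}[q']$ --- here Lemma \ref{lem:definable extension} applies legitimately, since $\pi^{-1}[q']$ is an $M$-definable partial type --- and then sets $r = q \odot p$, which is complete and $M$-definable by Fact \ref{fact:prodDef}. Invariance of $r$ is never checked formula by formula; instead the paper shows $r$ has boundedly many translates under $\pi^{-1}[(G/N)^{00}]$, using the theorem of Yao quoted at the start of Section 3 (the orbit of a \emph{definable} f-generic is closed, so $I = S_N(\CC) * p$ is exactly the bounded closed $N(\CC)$-orbit of $p$) to prove that $s * I$ depends only on $\pi(s)$, and then concludes by the Chernikov--Simon characterization (bounded orbit if and only if f-generic, available because $G$ is assumed definably amenable). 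That closed-orbit theorem is the ingredient your approach is missing; if you want to salvage your scheme-composition idea, replace ``push down, then complete invariantly'' by ``complete the lift of $q'$ arbitrarily but definably, then combine with $\odot$ and verify boundedness of the orbit.''
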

\begin{proof}
	Write $H = G/N$. Let $p \in S_N(\CC)$ and $q' \in S_H(\CC)$ be definable f-generic. Let $q \in S_G(\CC)$ be a definable extension of the partial type $\pi^{-1}[q']$ given by Lemma \ref{lem:definable extension}. Let $r = q \odot p$. Then $r$ is a definable type in $S_G(\CC)$. We will show that it has a bounded orbit. It is sufficient to show that it has boundedly many translates by the elements of $\pi^{-1}[H^{00}]$, a subgroup of $G$ with bounded index. Let $I = S_N(\CC) * p$ be the minimal flow generated by $p$. It is exactly the (closed) $N(\CC)$-orbit of $p$.
	
\begin{claim2}
	Let $s \in S_G(\CC)$. Then the set $s * I$ is determined by $\pi(s) \in S_H(\CC)$.
\end{claim2}
\begin{innerproof}[Proof of Claim]
	Take any $s, s' \in S_G(\CC)$ such that $\pi(s) = \pi(s')$. It is sufficient to show that $s' * I \subseteq s * I$. There are realizations $g \models s, g' \models p'$ such that $\pi(g) = \pi(g')$, that is $g' = gn$ for some $n \in N$. Let $p_0 \in I$ and $h \models p_0|\CC gn$. Then $gn h \models s' * p_0$. On the other hand, since the orbit of $p_0$ is closed, $\tp(nh/\CC g)$ is the heir of some type $p_1 \in I$ and hence
	\[gnh \models \tp(g/\CC) * \tp(nh/\CC) = s * \tp(n/\CC) * p \in s * I. \qedhere \]
\end{innerproof}
To complete the argument, take any $g \in \pi^{-1}[H^{00}]$ and consider $g \cdot q \odot p$. Since $\pi(g) \in H^{00} = \Stab_H(q')$, we have $\pi(g \cdot q) = \pi(q)$ and by the Claim, $g \cdot q \odot p = (g \cdot q) \odot p \in q * I$. Finally, $q * I$ is of bounded size since $|q * I| \leq |I|$ and $I$ is the (bounded) orbit of $p$.
\end{proof}

\subsection{Dp-minimal fields and \emph{dfg}}
In this subsection we perform an abridged reading of Sections 3 and 4 of \cite{Johnson} from the perspective of definable topological dynamics. The main goal is to recall the results concerning the existence of a canonical topology on a given dp-minimal field and use them to prove Theorem \ref{thm:dp minimal field}.

We will start with a brief overview. A theory (or a structure) is called \emph{dp-minimal} if it has dp-rank $0$ or $1$. Let $K$ be a dp-minimal, unstable field and $\CC \succ M$ a monster model. The work in \cite{Johnson} establishes the existence of a canonical topology on $K$ by constructing a partial type $I_K$ over $K$ and declaring that $\tau_K = \{\phi(K) : \phi \in I_K\}$ is the basis of open neighborhoods of $0$. This topology is shown to be consistent with the field operations. Moreover, this topology arises from a certain Henselian valuation on $\CC$: there is a $\bigvee$-definable ring $\OK \subseteq \CC$ containing $K$ that is valuational, with it smaximal ideal equal to $I_K(\CC)$. This ideal consists of all elements of $\CC$ contained in every $K$-definable (open) neighborhood of $0$; that is, elements that are \emph{infinitesimal} over $K$.

We will now recall the precise definitions and results that we need on the type $I_K$ and the valuational ring $\OK$ of $\CC$.

\begin{fact}[\cite{Johnson}, Observation 2.2]
	$\Th(K)$ eliminates $\exists^{\infty}.$
\end{fact}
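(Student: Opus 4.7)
The plan is to prove this as a general fact about dp-minimal theories (not using the field structure at all), by contradiction via dp-rank. For a formula $\phi(x,\bar y)$ with $|x|=1$, write $N_\phi(\bar b)$ for the cardinality of $\phi(\CC,\bar b)$. Elimination of $\exists^\infty$ for $\phi$ is equivalent to the existence of a uniform bound $N \in \omega$ such that whenever $N_\phi(\bar b)$ is finite, $N_\phi(\bar b) \leq N$; I would first reduce to this statement (the sets $\{\bar b : N_\phi(\bar b) \geq n\}$ are obviously definable for each finite $n$, so this bound exactly makes $\{\bar b : N_\phi(\bar b) < \infty\}$ definable).

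Next I would assume for contradiction that no such $N$ exists. Then for each $n$ there is $\bar b_n$ with $n \leq N_\phi(\bar b_n) < \infty$. By Ramsey's theorem and compactness in a sufficiently saturated extension, I extract an indiscernible sequence $(\bar b_i)_{i<\omega}$ with $N_\phi(\bar b_i)$ finite and strictly increasing (and in particular unbounded). Because the $\phi(\CC,\bar b_i)$ are finite, standard NIP machinery shows that the traces of other formulas on the union $\bigcup_i \phi(\CC,\bar b_i)$ are well-controlled; the key growth phenomenon is that, since cardinalities blow up along $i$, we can, by a counting/pigeonhole argument within each $\phi(\CC,\bar b_i)$, produce an element $a_i \in \phi(\CC,\bar b_i)$ sitting in a prescribed type over the initial segment $\bar b_{<i}$.

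The final step is to convert this data into an ict-pattern of depth $2$ and derive a contradiction with dp-minimality. The first row is the mutually indiscernible sequence $(\bar b_i)$, witnessing the formula $\phi(x,\bar y)$; for the second row, using the growing finite cardinalities I extract (after passing to a further indiscernible subsequence of $(a_i, \bar b_i)$) an indiscernible sequence $(c_j)$ together with a formula $\psi(x,z)$ for which $\psi(a_i,c_j)$ follows a flip pattern in $j$ while $\phi(a_i,\bar b_i)$ still holds for all $i$ independently. Arranging this carefully, one gets two mutually indiscernible sequences witnessing two independent formulas on the same element, i.e.\ dp-rank at least $2$, contradicting dp-minimality.

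The main obstacle is precisely the last paragraph: converting the indiscernible sequence of parameters with growing finite instance-sizes into a genuine inp/ict-pattern of depth $2$. The challenge is that the finiteness of each $\phi(\CC,\bar b_i)$ means $\phi(x,\bar b_i)$ alone cannot furnish the second row, so one has to use the growing cardinalities to extract an auxiliary formula that alternates independently. Once this second formula is in hand, invoking dp-minimality is immediate; the combinatorics of choosing the $a_i$'s coherently with the indiscernibility of $(\bar b_i)$ is where the real work lies.
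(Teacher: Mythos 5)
There is a genuine gap, and it is fatal to the approach rather than to any single step. First, a point of context: the paper does not prove this statement at all --- it is imported verbatim from Johnson's paper (Observation 2.2 there) --- so there is no in-paper argument to match. More importantly, the statement you are actually trying to prove, namely that \emph{every} dp-minimal theory eliminates $\exists^\infty$ ``not using the field structure at all'', is false. A standard counterexample is the theory of a single equivalence relation $E$ with exactly one class of size $n$ for each $n\geq 1$: this theory is stable and dp-minimal, yet the formula $E(x,y)$ has finite fibers of unbounded size together with infinite (pseudofinite) fibers in a saturated model, and by compactness the set $\{y : \exists^\infty x\, E(x,y)\}$ is not definable. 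So any correct proof must use the field (or at least the definable group) structure of $K$; in Johnson's setting the extra ingredient is essentially that additive translates $a+X$ of a finite set $X$ by sufficiently independent elements $a$ are pairwise disjoint, which is what supplies the second, independent row of an ict-pattern. Your own write-up concedes that producing this second row is ``where the real work lies'', and that is precisely the point where bare dp-minimality cannot deliver.

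There is also a concrete technical error earlier in the argument. For each fixed $k$, the condition $|\phi(\CC,\bar y)|=k$ (and likewise $|\phi(\CC,\bar y)|\geq k$) is expressible by a first-order formula in $\bar y$, so along an indiscernible sequence the fibers $\phi(\CC,\bar b_i)$ either all have the same finite cardinality or are all infinite. Consequently an indiscernible sequence $(\bar b_i)_{i<\omega}$ with $N_\phi(\bar b_i)$ finite and strictly increasing does not exist; and if you extract an indiscernible sequence from parameters $\bar b_n$ satisfying $n\leq N_\phi(\bar b_n)<\infty$, the resulting EM-type contains $|\phi(\CC,\bar y)|\geq k$ for every $k$, so every fiber of the extracted sequence is infinite. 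The ``growing finite cardinalities'' on which the remainder of your argument relies are therefore unavailable from the outset.
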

In fact, for a formula $\phi(x,\bar{y})$ there is a formula $t_{\phi}(\bar{y})$ without parameters such that for all $\bar{b}$ we have $\models t_\phi(\bar{b}) \iff \models \exists^{\infty}x \phi(x,\bar{b})$.

Let $X, Y \subseteq \CC$. Define
\[X -_\infty Y := \{c \in \CC : \exists^{\infty} y \in Y ~ (c + y \in X)\}.\]
If $X, Y$ are definable over some set of parameters $A$, then $X -_\infty Y$ is definable over $A$.

With this operation, Johnson defines the following partial type:
\[I_K := \{X -_\infty X : X \text{ is infinite and } K\text{-definable}\}.\]

A complete type $p \in S(K)$ is called \emph{infinitesimal (over $K$)} if $p$ extends $I_K$. An element of $\CC$ realizing an infinitesimal type over $K$ is called \emph{infinitesimal over $K$}. By definition, an element $c \in \CC$ is infinitesimal over $K$ if and only if for every infinite, $K$-definable set $X$, the set $X \cap (c + X)$ is also infinite.

\begin{fact}[\cite{Johnson}]
	\phantomsection \label{fact:infClosed}
	\begin{enumerate}[(i)]
		\item $I_K(\CC)$ is a subgroup of $(\CC, +)$.
		\item $I_K(\CC)$ is invariant under multiplication by $K$; that is, $K \cdot I_K(\CC) = I_K(\CC)$.
	\end{enumerate}

\end{fact}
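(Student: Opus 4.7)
The proof of (ii) is essentially a change of variables and does not need dp-minimality. For $a \in K^\times$ and $c \in I_K(\CC)$, fix any infinite $K$-definable $X$. Then $ac \in X -_\infty X$ iff there are infinitely many $y \in X$ with $ac + y \in X$. Substituting $y = az$ rewrites this as: infinitely many $z \in a^{-1}X$ with $c + z \in a^{-1}X$, i.e.\ $c \in (a^{-1}X) -_\infty (a^{-1}X)$. Since $a^{-1}X$ is again infinite and $K$-definable and $c$ is infinitesimal over $K$, this holds; the reverse inclusion $I_K(\CC) \subseteq K \cdot I_K(\CC)$ is immediate from $c = 1 \cdot c$.

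The easy parts of (i) are analogous. That $0 \in I_K(\CC)$ is immediate, since for any infinite $X$ we have $0 + y = y \in X$ for all $y \in X$. For closure under negation, unwinding the definitions gives $c \in X -_\infty X \Leftrightarrow |X \cap (X - c)| \geq \aleph_0$ and $-c \in X -_\infty X \Leftrightarrow |X \cap (X + c)| \geq \aleph_0$; the two intersections are translates of one another, so they have the same cardinality, and $c \in I_K(\CC) \Leftrightarrow -c \in I_K(\CC)$.

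The real content of (i) is closure under addition, and this is where dp-minimality is essential. Given $c, d \in I_K(\CC)$ and an infinite $K$-definable $X$, I need $|X \cap (X - c - d)| \geq \aleph_0$. The naive iteration fails: the set $A := X \cap (X - c)$ is infinite by the infinitesimality of $c$, but $A$ is only $Kc$-definable, so the hypothesis $d \in I_K(\CC)$ does not directly yield $|A \cap (A - d)| \geq \aleph_0$. My plan would be to follow Johnson and establish a topological-group-style \emph{triangle inequality} for the family $\{X -_\infty X : X \text{ infinite, } K\text{-definable}\}$: for every such $X$ there is an infinite $K$-definable $X'$ with $(X' -_\infty X') + (X' -_\infty X') \subseteq X -_\infty X$. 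Note that elimination of $\exists^\infty$ (the cited Observation 2.2) ensures each $X -_\infty X$ is itself $K$-definable, so this is a statement about genuine $K$-definable subsets of $\CC$. Granted this inequality, together with symmetry and the fact that $0$ lies in every such set, the sets $X -_\infty X$ form a basis at $0$ for a group topology on $K$, and $I_K(\CC)$---being the intersection of a basis of neighborhoods of $0$ in a topological group---is automatically additively closed, and (ii) then recovers scale invariance. Establishing the triangle inequality is the main obstacle; in Johnson's treatment it uses dp-minimality essentially, via an ICT-pattern argument choosing $X'$ carefully in terms of $X$, with elimination of $\exists^\infty$ used to keep everything first-order.
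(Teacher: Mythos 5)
The paper does not prove this statement at all: it is imported verbatim from Johnson's paper as a cited fact (the relevant results are in Sections 3--4 of \cite{Johnson}), so there is no in-paper argument to compare yours against. Measured against that, your write-up already does more than the paper. The parts you actually prove are correct: the change of variables $y = az$ for (ii) is valid (note $a^{-1}X$ is again infinite and $K$-definable precisely because $a \in K^\times$, and the case $a = 0$ is trivial since $0 \in I_K(\CC)$); likewise $0 \in I_K(\CC)$ and the observation that $X \cap (X+c)$ is a translate of $X \cap (X-c)$ correctly give the easy parts of (i). You are also right that none of this uses dp-minimality, and right that the identity $X -_\infty X = \{c : \exists^\infty y\, (y \in X \wedge c+y \in X)\}$ together with elimination of $\exists^\infty$ makes each $X -_\infty X$ a genuine $K$-definable set.

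The gap is exactly where you say it is: closure under addition. Your diagnosis of why the naive iteration fails ($X \cap (X-c)$ is only $Kc$-definable, so the infinitesimality of $d$ over $K$ says nothing about it) is the correct one, and the reduction to a triangle inequality --- for each infinite $K$-definable $X$ find an infinite $K$-definable $X'$ with $(X' -_\infty X') + (X' -_\infty X') \subseteq X -_\infty X$ --- is indeed the shape of Johnson's argument (it is what makes $\tau_K$ a group topology, from which additive closure of the intersection $I_K(\CC)$ is formal). But you do not prove that inequality; you only assert that it follows from an ICT-pattern argument. Since this is the unique point in the entire statement where dp-minimality enters, deferring it to \cite{Johnson} leaves the proposal as a reduction rather than a proof. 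That is defensible given that the paper itself cites the fact without proof, but if you intend a self-contained argument you must supply the dp-rank computation: assuming some infinite $K$-definable $X$ and infinitesimals $\delta_1, \delta_2$ with $X \cap (X - \delta_1 - \delta_2)$ finite, one has to extract two mutually indiscernible sequences witnessing an ICT pattern of depth $2$ in the single variable $x$, contradicting dp-minimality. Everything else in your proposal stands.
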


The last step is to define the ring $\OK \subseteq \CC$. It is precisely the multiplicative stabilizer of $I_K(\CC)$ in $\CC$:
\[\OK := \{x \in \CC : x \cdot I_K(\CC) \subseteq I_K(\CC)\}.\]

\begin{fact}[\cite{Johnson}]
	$\OK$ is a valuational subring of $\CC$ with the maximal ideal $I_K(\CC)$.
\end{fact}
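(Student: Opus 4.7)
The statement decomposes into four sub-claims: (a) $\OK$ is a subring of $\CC$; (b) $I_K(\CC) \subseteq \OK$ and is an ideal there; (c) the valuation dichotomy, i.e.\ for every $x \in \CC^\times$, $x \in \OK$ or $x^{-1} \in \OK$; and (d) $I_K(\CC)$ is the unique maximal ideal. Of these, only (c) will require dp-minimality in an essential way; (a), (b) and (d) are formal consequences of Fact \ref{fact:infClosed} once (c) is available.

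Sub-claim (a) is immediate: for $x, y \in \OK$ one has
\[(x+y)\cdot I_K(\CC) \subseteq x \cdot I_K(\CC) + y \cdot I_K(\CC) \subseteq I_K(\CC)\]
by Fact \ref{fact:infClosed}(i), and similarly $xy, -x \in \OK$; also $1 \in \OK$ trivially, and Fact \ref{fact:infClosed}(ii) even gives $K \subseteq \OK$. For (b), the content is that products of infinitesimals are infinitesimal, which gives $I_K(\CC) \subseteq \OK$; the ideal property is then just the definition of $\OK$. To show $a b \in X -_\infty X$ for $a, b \in I_K(\CC)$ and $X$ infinite $K$-definable, I would exploit the $\bigvee$-definability of $I_K$ over $K$ together with elimination of $\exists^\infty$, reducing the relevant statement to one with parameters in $K$ and then transferring back via saturation of $\CC$.

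The main obstacle is (c). Assume toward a contradiction that neither $x$ nor $x^{-1}$ lies in $\OK$, so that there exist infinite $K$-definable sets $X, Y$ and infinitesimals $a, b$ with $xa \notin X -_\infty X$ and $x^{-1}b \notin Y -_\infty Y$. The plan is to extract from this data a two-parameter family whose trace witnesses an inp-pattern of depth $2$, contradicting $\mathrm{dp}\text{-rank}(K) = 1$: one direction of the pattern comes from translating along multiples of $xa$ (exiting $X-_\infty X$) while the other comes from translating along multiples of $x^{-1} b$. Johnson's paper packages this somewhat indirectly by first establishing that the topology $\tau_K$ generated by $I_K$ is a Hausdorff field topology and then deducing the valuational character of $\OK$ from the general theory of topological fields built from a dp-minimal structure; I expect this route to be substantially cleaner than a direct pattern extraction, and I would follow it. Once (c) is established, (d) is a one-liner: if $x \in \OK \setminus I_K(\CC)$, then by (c) we get $x^{-1} \in \OK$, so $x$ is a unit of $\OK$, whence $I_K(\CC)$ is exactly the set of non-units of $\OK$ and hence its unique maximal ideal.
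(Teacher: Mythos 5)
First, note that the paper does not prove this statement at all: it is imported verbatim from \cite{Johnson} as a black-box Fact, so there is no in-paper argument to compare against. Judged on its own terms, your proposal is an outline of Johnson's Sections 3--4 rather than a proof, and it has one concrete logical gap. In sub-claim (d) you write that if $x \in \OK \setminus I_K(\CC)$ then ``by (c) we get $x^{-1} \in \OK$.'' The dichotomy (c) only asserts that $x \in \OK$ \emph{or} $x^{-1} \in \OK$; once you already know $x \in \OK$, (c) gives you no information whatsoever about $x^{-1}$. What you actually need is the identification of the non-units of $\OK$ with $I_K(\CC)$, i.e.\ the equivalence $x \in \OK \iff x^{-1} \notin I_K(\CC)$ for $x \neq 0$ (recorded in the paper's Remark as a consequence, not a triviality). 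That requires showing both that no nonzero infinitesimal has its inverse in $\OK$ (which uses $1 \notin I_K(\CC)$, i.e.\ Hausdorffness of the topology, itself a nontrivial point of \cite{Johnson}) and that every element of $\OK$ whose inverse fails to be infinitesimal's reciprocal-stabilizer is already in $I_K(\CC)$. Neither follows formally from the dichotomy.

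Beyond that, the two substantive inputs are deferred rather than supplied: for (b) you gesture at ``$\bigvee$-definability plus elimination of $\exists^\infty$ and saturation'' without saying what statement is being transferred (closure of $I_K(\CC)$ under multiplication is genuinely where Johnson works, via the interaction of $-_\infty$ with definable families and dp-minimality), and for (c) you explicitly say you would ``follow Johnson's route'' through the Hausdorff field topology. That is a sensible plan, but it means the proposal reduces the Fact to the very results of \cite{Johnson} that the Fact is summarizing. The formal parts (a) and the ``ideal by definition'' half of (b) are fine.
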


The valuational ring induces a valuation $v_K \colon \CC^\times \to \CC^\times/\OK^{\times}$.

\begin{fact}[\cite{Johnson}]
	\begin{enumerate}[(i)]
		\item $K \subseteq \OK$. Hence, $v_K$ is trivial on $K$.
		\item $\OK$ is $\bigvee$-definable.
		\item The valuation $v_K$ is Henselian.
	\end{enumerate}
\end{fact}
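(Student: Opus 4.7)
The fact breaks into three parts of very different difficulty, so my plan is to dispatch (i) and (ii) quickly from earlier material and to outline the approach to (iii), which is the substantial content.

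For (i), the inclusion $K \subseteq \OK$ is an immediate restatement of Fact \ref{fact:infClosed}(ii): for any $x \in K$, $x \cdot I_K(\CC) \subseteq K \cdot I_K(\CC) = I_K(\CC)$, which is precisely the defining condition of $\OK$. Since for $x \in K^\times$ both $x$ and $x^{-1}$ lie in $K \subseteq \OK$, $x$ is a unit in $\OK$, so $v_K$ is trivial on $K^\times$.

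For (ii), I would invoke the previously stated fact that $\OK$ is a valuation ring of $\CC$ with maximal ideal $I_K(\CC)$. Standard valuation theory then gives, for $x \in \CC^\times$, that $x \notin \OK$ if and only if $x^{-1} \in I_K(\CC)$. Hence
\[ \CC \setminus \OK = \{x \in \CC^\times : x^{-1} \in I_K(\CC)\}, \]
which is the preimage of the $K$-type-definable set $I_K(\CC)$ under the $\emptyset$-definable map $x \mapsto x^{-1}$ on $\CC^\times$, and hence itself type-definable over $K$. Consequently $\OK$ is $\bigvee$-definable.

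Part (iii), Henselianity, is the genuine obstacle. My plan: first observe that $\OK$ induces a $V$-topology on $\CC$ whose restriction to $K$ coincides with $\tau_K$ (the very topology whose basic neighborhoods of $0$ are the $\phi(K)$ with $\phi \in I_K$). Then apply a criterion for Henselianity of $V$-topologies in the style of Prestel--Ziegler: it suffices to verify a $t$-Henselian (Krasner-type) condition, namely that simple approximate roots of monic polynomials of bounded degree can be refined to exact roots using basic open neighborhoods of $0$. This is where dp-minimality must enter essentially. The definition of $I_K$ through the operator $X -_\infty X$, combined with elimination of $\exists^\infty$ (which turns ``infinitely many'' into a first-order property), should provide the approximation scheme: starting from an approximate root one uses dp-rank arguments to shrink the error within any prescribed basic neighborhood, eventually producing a genuine root. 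This is the conceptual heart of the analysis in \cite{Johnson}, and is where I expect the real difficulty to lie; a self-contained argument would require developing nontrivial new machinery from dp-rank theory, which is why the author appeals directly to Johnson's paper for the statement.
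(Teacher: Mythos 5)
Your parts (i) and (ii) are correct and coincide with what the paper itself records: (i) is exactly the combination of the definition of $\OK$ with Fact \ref{fact:infClosed}(ii), and your argument for (ii) --- that for $x \neq 0$ one has $x \in \OK \iff x^{-1} \notin I_K(\CC)$, so that the complement of $\OK$ is type-definable over $K$ --- is precisely the observation the paper makes in the Remark immediately following this Fact. Note, however, that the paper does not actually prove this statement at all: all three items are imported wholesale from \cite{Johnson}, so for (i) and (ii) you have supplied slightly more detail than the source text, and there is nothing to compare against for (iii).

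For (iii) your text is a strategy sketch, not a proof, and as a proof it has a genuine gap beyond the admitted reliance on \cite{Johnson}. The route through a Prestel--Ziegler-style $t$-henselianity criterion does not by itself deliver Henselianity of the \emph{specific} valuation ring $\OK$: $t$-henselianity is a property of the $V$-topology, and passing from a $t$-henselian topology to a henselian valuation inducing it requires additional arguments (and fails without further hypotheses, e.g.\ for real closed or separably closed fields, where the topology is $t$-henselian but need not come from a henselian valuation ring of the given form). Moreover, the step you describe as ``using dp-rank arguments to shrink the error within any prescribed basic neighborhood, eventually producing a genuine root'' is not an approximation scheme one can extract from the definition of $I_K$ via $X -_\infty X$ and elimination of $\exists^\infty$ alone; it is essentially the content of the theorem being cited. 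Since the paper treats the whole Fact as a black box, deferring (iii) to \cite{Johnson} is acceptable, but you should present it as a citation rather than as an outline that could be completed by routine means.
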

\begin{remark}
	\begin{enumerate}[(i)]
		\item As remarked in \cite{Johnson}, the notation here is somewhat unfortunate: $\OK$ is not a valuational ring of $K$; the subscript refers to the (small) set of parameters $K$.
		\item Regarding item (ii), it is easy to note that if $x \neq 0$,
		\[x \in \OK \iff x^{-1} \notin I_K,\]
		hence $\OK$ is $\bigvee$-definable over $K$. Consequently, the induced valuation $v_K$ is $K$-invariant: $v_K(x)$ depends only on $\tp(x/K)$.
	\end{enumerate}
\end{remark}

Now suppose $\CCC \succ \CC$ is $\lambda$-saturated and $\lambda$-strongly homogeneous for (strong limit) $\lambda > |\CC|$. Working with $\CC$ in place of $K$ we can repeat the whole construction and consider a partial type $I_\CC$ over $\CC$ along with the valuational ring $\OCC \subseteq \CCC$ and the valuation $v_\CC : \CCC^{\times} \to \CCC^{\times}/\OCC^{\times}$. Then a global type $p \in S(\CC)$ is infinitesimal if $p$ extends $I_\CC$. A crucial result is the following:
\begin{fact}[\cite{Johnson}, Corollary 4.10]
	\label{fact:boundedInf}
	There are only boundedly many global infinitesimal types.
\end{fact}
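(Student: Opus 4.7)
The plan is to leverage the $\bigvee$-definability of $I_\CC$ and $\OCC$ (over $\CC$), the $\CC$-invariance and Henselianity of the valuation $v_\CC$, and dp-minimality itself, combining them in two stages: first show that every global infinitesimal type is $\CC$-invariant, then bound the number of invariant infinitesimal types by parametrizing them via the data of the valuation.

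For invariance, I would argue by contradiction. Suppose $p \in S(\CCC)$ extends $I_\CC$ but is not $\CC$-invariant, witnessed by some $\phi(x,y)$ and a pair $c \equiv_\CC c'$ with $\phi(x,c) \in p$ and $\lnot\phi(x,c') \in p$. Build a long $\CC$-indiscernible sequence $(c_i)$ in $\tp(c/\CC)$ and arrange that $p$ alternates on the $\phi(x,c_i)$, giving one inp-direction. Combine this with a second direction coming from the valuation --- e.g., elements of distinct positive $v_\CC$-values fit in a $\CC$-definable formula tracking valuation inequalities --- to produce a rank $2$ inp-pattern realized by a realization of $p$, contradicting dp-minimality.

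Having established that each global infinitesimal type $p$ is $\CC$-invariant, I would bound the type space by showing $p$ is determined by bounded data. Namely, for $a \models p$, the value $v_\CC(a)$ lies in the positive cone of $\Gamma_\CCC = \CCC^\times/\OCC^\times$; and after dividing by a fixed representative of that value the residue information sits in $\OCC^\times/(1 + I_\CC(\CCC))$, which lifts by Henselianity to the residue field of $v_\CC$. Both of these quotients inherit dp-minimality from $\CCC$, so their spaces of $\CC$-invariant types are controlled by standard dp-rank one cut-counting (for the value group, a dp-minimal ordered abelian group; for the residue, a dp-minimal field to which one may induct). Putting the two factors together gives the bound.

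The main obstacle is the invariance step, because ruling out non-invariant completions of $I_\CC$ uses dp-minimality in an essential and not entirely mechanical way: producing the second inp-direction needs the concrete description of the canonical topology developed earlier in \cite{Johnson}, which ensures that locally near $0$ all definable data is already seen by $\CC$-definable sets. In a self-contained presentation one would need to import that machinery; here I would simply cite it, and treat the subsequent bounding step as the routine part of the argument.
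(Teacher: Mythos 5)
First, a point of reference: the paper does not prove this statement at all --- it is imported verbatim as Corollary 4.10 of \cite{Johnson} --- so there is no in-paper argument to compare yours against; your sketch has to stand on its own. It does not, and the gap is at the very first step. A \emph{global} infinitesimal type is by definition a complete type over $\CC$ extending $I_\CC$, so ``$\CC$-invariance'' is vacuous for the objects actually being counted; you instead pass to completions of $I_\CC$ over the larger monster $\CCC$, and for those the invariance claim is simply false. Take $\Th(K)=\mathrm{RCF}$ (a dp-minimal unstable field, squarely in Johnson's setting), let $\epsilon\in\CCC$ be a positive infinitesimal over $\CC$, and let $p\in S(\CCC)$ be any completion of $I_\CC\cup\{\epsilon^2<x<\epsilon\}$. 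All positive infinitesimals over $\CC$ realize the same type over $\CC$, so $\epsilon\equiv_\CC\epsilon^2$, yet $x<\epsilon$ lies in $p$ while $x<\epsilon^2$ does not; hence $p$ is not $\Aut(\CCC/\CC)$-invariant. No inp-pattern computation can establish a false statement. Moreover, even if every completion over $\CCC$ were $\CC$-invariant, the resulting cardinality bound $2^{2^{|\CC|+|\LL|}}$ is not ``bounded'' in the sense the paper needs: boundedness here means smaller than the saturation degree $\kappa$ of $\CC$ (this is exactly what Proposition \ref{prop:inffgen} uses to get a bounded multiplicative orbit), and that forces invariance, or definability, over a \emph{fixed small} parameter set such as $K$ --- not over $\CC$.

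There are also problems in the supporting machinery. Your first ``inp-direction'' is an alternation of $p$ along an indiscernible sequence; that witnesses the order property, not a row of an inp-pattern, since a row must be $k$-inconsistent and alternation gives no mutual inconsistency. Your second direction appeals to ``a $\CC$-definable formula tracking valuation inequalities,'' but $v_\CC$ is only $\bigvee$-definable, and the value group $\CCC^\times/\OCC^\times$ and the residue field are not definable quotients, so they neither automatically inherit dp-minimality nor support the ``cut-counting over $\CC$'' you invoke in the second stage. What would actually suffice --- and what any correct proof of Fact \ref{fact:boundedInf} must deliver --- is that every completion of $I_\CC$ \emph{in $S(\CC)$} is invariant over a fixed small set, giving at most $2^{2^{|K|+|\LL|}}<\kappa$ such types. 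That step is the entire content of the result, and your sketch does not supply it.
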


We now proceed to retrieve the consequences for topological dynamics.

\begin{proposition}
	\label{prop:inffgen}
	Every nonzero global infinitesimal type is a multiplicative $f$-generic.
\end{proposition}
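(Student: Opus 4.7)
The plan is to verify f-genericity via the bounded-orbit characterization from the preliminaries (Fact 1.1). Since $\Gm$ is abelian, it is definably amenable, so it suffices to show that every nonzero global infinitesimal type $p \in S_\Gm(\CC)$ has bounded $\Gm(\CC)$-orbit.

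First, I would observe that nonzero infinitesimal types form a multiplicatively invariant subset of $S_\Gm(\CC)$. Indeed, apply the $\CC$-level version of the whole Johnson construction: working in $\CCC$ with $\CC$ in place of the base field, we obtain the partial type $I_\CC$, and by (the $\CC$-analogue of) Fact \ref{fact:infClosed}(ii) the set $I_\CC(\CCC)$ is invariant under multiplication by $\CC$. Consequently, for any $a \in \Gm(\CC)$ and any realization $b \models p$ in $\CCC$, the element $ab$ still lies in $I_\CC(\CCC)$ (and is nonzero, since $a,b \neq 0$). Therefore $a \cdot p = \tp(ab/\CC)$ is again a nonzero global infinitesimal type.

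It follows that the entire $\Gm(\CC)$-orbit of $p$ is contained in the set of nonzero global infinitesimal types, which by Fact \ref{fact:boundedInf} is of bounded cardinality. So $|\Gm(\CC) \cdot p| < \kappa$, and the characterization of f-generics in definably amenable NIP groups gives that $p$ is a multiplicative f-generic of $\Gm$, as required.

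The conceptual content is entirely packaged in Johnson's two facts (multiplicative invariance of infinitesimals under the base field, and boundedness of the collection of global infinitesimal types); there is no genuine obstacle here beyond checking that one may legitimately apply the infinitesimal construction with $\CC$ in the role of the ground field, which is standard since $\CCC$ is a sufficiently saturated and strongly homogeneous extension of $\CC$.
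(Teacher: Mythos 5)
Your proof is correct and follows essentially the same route as the paper: both arguments combine Fact \ref{fact:infClosed}(ii) (multiplicative invariance of the global infinitesimal types) with Fact \ref{fact:boundedInf} (boundedness of their number) to conclude the multiplicative orbit is bounded, hence the type is f-generic. The only difference is that you spell out explicitly the appeal to definable amenability of $\Gm$ and the bounded-orbit characterization, which the paper leaves implicit.
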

\begin{proof}
	The set $[I_\CC] \cap S(\CC)$ is closed under multiplication by $\CC$ by Fact \ref{fact:infClosed}(ii) and bounded in size by Fact \ref{fact:boundedInf}. Hence the multiplicative orbit of each nonzero global infinitesimal is bounded and consists only of infinitesimal types.
\end{proof}

\begin{proposition}
	The partial type $I_K$ is $0$-definable and its unique global heir is $I_\CC$.
\end{proposition}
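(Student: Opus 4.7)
The plan is to leverage the elimination of $\exists^\infty$ in $\Th(K)$ to exhibit a $0$-definable scheme generating $I_K$, from which both $0$-definability and the identification of the heir fall out.

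First, for each formula $\phi(y, \bar z)$ without parameters, apply $\exists^\infty$-elimination to the formula $\phi(y, \bar z) \land \phi(y + x, \bar z)$, treated as a formula in the bound variable $y$ with free variables $(x, \bar z)$. This produces a $0$-definable formula $\chi_\phi(x, \bar z)$ such that for any $b, \bar a$,
\[\models \chi_\phi(b, \bar a) \iff \exists^\infty y\bigl(\phi(y, \bar a) \land \phi(y + b, \bar a)\bigr) \iff b \in \phi(\CC, \bar a) -_\infty \phi(\CC, \bar a).\]
Set also $\sigma_\phi(\bar z) := t_\phi(\bar z)$, so that $\models \sigma_\phi(\bar a)$ iff $\phi(\CC, \bar a)$ is infinite. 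Since every infinite $K$-definable subset of $\CC$ has the form $\phi(\CC, \bar a)$ for some $\phi$ over $\emptyset$ and some $\bar a \in K$ satisfying $\sigma_\phi(\bar a)$, the partial type $I_K$ is equivalent to
\[\bigl\{\chi_\phi(x, \bar a) : \phi(y, \bar z) \in \LL, \ \bar a \in K, \ \models \sigma_\phi(\bar a)\bigr\}.\]
This displays $I_K$ as $0$-definable, with the $0$-definable formula $\sigma_\phi$ serving as the $\phi$-definition.

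Second, running the same scheme with parameters drawn from $\CC$ in place of $K$ gives, by the analogous description of $I_\CC$,
\[\bigl\{\chi_\phi(x, \bar c) : \phi(y, \bar z) \in \LL, \ \bar c \in \CC, \ \models \sigma_\phi(\bar c)\bigr\} = I_\CC.\]
This is by construction the global extension of $I_K$ whose $\phi$-definitions are the $\sigma_\phi$'s, i.e. the unique global heir of $I_K$ via its $0$-definable scheme.

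I do not expect a serious obstacle. The only mildly subtle point is making the notion of heir precise for the partial (rather than complete) type $I_K$; once the $0$-definable scheme is in place, the heir is, by definition or by straightforward verification, the unique partial type over $\CC$ whose $\phi$-definitions agree with those of $I_K$, and this is precisely $I_\CC$.
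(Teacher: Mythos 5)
Your proof is correct and follows essentially the same route as the paper: both present $I_K$ via the $0$-definable scheme $\{\exists^{\infty}y\,(\phi(y,\bar z)\land\phi(x+y,\bar z))\}$ with $\phi$-definition ``$\phi(\CC,\bar z)$ is infinite,'' and read off the heir from the fact that $I_\CC$ is generated by the same scheme over $\CC$. The only difference is cosmetic: you explicitly invoke elimination of $\exists^{\infty}$ to replace these by genuine first-order formulas $\chi_\phi$ and $\sigma_\phi$, which the paper leaves implicit.
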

\begin{proof}
	Let $\phi(x, \bar{z})$ be a formula without parameters and
	\[\psi(x, \bar{z}) = \exists^{\infty} y ~ \phi(y, \bar{z}) \land \phi(x + y, \bar{z}).\]
	Then $I_K = \{\psi(x, \bar{z}) : \bar{z} \subset K,\ \exists^{\infty} y ~ \phi(y, \bar{z})\}$ and $I_\CC = \{\psi(x, \bar{z}) : \bar{z} \subset \CC,\ \exists^{\infty} y ~ \phi(y, \bar{z})\}$.
	Clearly $\exists^{\infty} y ~ \phi(y, \bar{z})$ is the $\psi(x, \bar{z})$-definition of $I_K$ over $\emptyset$ and the same is true for $I_\CC$.
\end{proof}

We obtain the following corollary about heir extensions of infinitesimal types over a small model:
\begin{corollary}
	Let $p \in S(K)$ be infinitesimal and $p'$ its global heir. Then $p'$ is infinitesimal (over $\CC$).
\end{corollary}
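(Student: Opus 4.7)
The plan is to combine the previous proposition, which computes the $\psi$-definition of $I_K$ (and of $I_\CC$) for formulas $\psi$ of the special form $\exists^\infty y\,(\phi(y,\bar z) \wedge \phi(x+y, \bar z))$, with the fact that heirs preserve definition schemes.

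Fix an arbitrary formula $\chi(x,\bar c) \in I_\CC$. By the very definition of $I_\CC$, $\chi(x, \bar z)$ has the form $\psi(x,\bar z) := \exists^\infty y\, \bigl(\phi(y,\bar z) \wedge \phi(x+y,\bar z)\bigr)$ for some $\emptyset$-formula $\phi$, and $\bar c \subset \CC$ satisfies $\exists^\infty y\,\phi(y,\bar c)$. Let $d_\psi(\bar z)$ be the $\psi$-definition of $p$ over $K$. Since $p \supseteq I_K$, for every $\bar b \subset K$ with $\exists^\infty y\,\phi(y,\bar b)$ we have $\psi(x,\bar b) \in p$, i.e.\ $\models d_\psi(\bar b)$. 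Using elimination of $\exists^\infty$ in $\Th(K)$, the implication
\[
\forall \bar z\,\bigl(\exists^\infty y\,\phi(y,\bar z) \to d_\psi(\bar z)\bigr)
\]
is a first-order $\LL(K)$-sentence that holds in $K$, hence by elementarity it holds in $\CC$.

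Now $p'$ is the heir of $p$, which means $p'$ has the same $\psi$-definition as $p$, namely $d_\psi$. Applying the above implication in $\CC$ to our $\bar c$ yields $\models d_\psi(\bar c)$, which is exactly $\psi(x,\bar c) \in p'$. Since $\chi(x,\bar c) \in I_\CC$ was arbitrary, $I_\CC \subseteq p'$, i.e.\ $p'$ is infinitesimal over $\CC$.

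There is no real obstacle here; the argument is just a direct application of the previous proposition, which already identifies $\exists^\infty y\,\phi(y,\bar z)$ as a uniform $\psi$-definition for $I_K$ and $I_\CC$. The only mildly subtle point is that the $\psi$-definition $d_\psi$ of $p$ need not literally coincide with $\exists^\infty y\,\phi(y,\bar z)$, but one only needs the implication from the latter to the former, which is forced by $p \supseteq I_K$ and transfers from $K$ to $\CC$ by elementarity.
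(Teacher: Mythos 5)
Your argument is correct and is essentially the paper's intended one: the corollary is drawn directly from the preceding proposition that $I_K$ is $0$-definable with the uniform $\psi$-definition $\exists^{\infty}y\,\phi(y,\bar z)$ and global heir $I_\CC$, and your proof just unwinds that, using elimination of $\exists^{\infty}$ to transfer the implication $\exists^{\infty}y\,\phi(y,\bar z)\to d_\psi(\bar z)$ from $K$ to $\CC$ and the fact that the heir $p'$ keeps the same definition scheme. The subtle point you flag (that $d_\psi$ need only be implied by, not equal to, $\exists^{\infty}y\,\phi(y,\bar z)$) is exactly the right one and is handled correctly.
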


We conclude with the previously announced result.:
\begin{proof}[of Theorem \ref{thm:dp minimal field}]
	Let $p$ be a nonzero global infinitesimal type. Its restriction $r = p \restriction K \in S(K)$ is infinitesimal and definable over $K$. Then the global heir of $r$ is also $K$-definable and infinitesimal over $\CC$, hence a multiplicative f-generic by Proposition \ref{prop:inffgen}.
\end{proof}

We point out that there is a dp-rank $2$ field $K$ definable in a model over which all types are definable such that its multiplicative group is not \emph{dfg}. 
\begin{example}
	Let $\mathbb{C}$ be the field of complex numbers equipped with a predicate $R$ for the reals. Then its multiplicative group $\Gm$ is definably isomorphic to the product $S^1 \times (R^+, \cdot)$ where $S^1$ is the circle group. Then $(S^1 \times (R^+, \cdot))^0 = S^1 \times (R^+, \cdot)$, but $(S^1 \times (R^+, \cdot))^{00} = (S^1)^{00} \times (R^+, \cdot) \lneq S^1 \times (R^+, \cdot)$, since $(S^1)^{00}$ is a proper subgroup of $S^1$ consisting of all elements infinitesimally close to identity over $\mathbb{R}$. Hence $\Gm$ does not have \emph{dfg}: by an observation by Yao and Pillay, if $G$ has \emph{dfg}, then $G^{00} = G^{0}$.
\end{example}

We finish this section by giving examples of definable algebraic groups with \emph{dfg}. Recall that a (connected) unipotent algebraic over $K$ group $G$ has a normal sequence $\{1\} = N_0 \lhd N_1 \lhd \ldots \lhd N_n = G$ such that each quotient $N_{i+1}/N_i$ is (definably) isomorphic to $\Ga$. By Proposition \ref{prop:dfg extension} and induction, we have (without assuming dp-minimality of $K$):
\begin{corollary}
	Let $K$ be a NIP field with all types over $K$ definable. Then every connected unipotent algebraic over $K$ group $U$ has \emph{dfg} and is definably extremely amenable.
\end{corollary}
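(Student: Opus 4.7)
The plan is a straightforward induction on the length $n$ of the normal series $\{1\} = N_0 \lhd N_1 \lhd \ldots \lhd N_n = U$, with each step combining Proposition \ref{prop:dfg extension} (for dfg) with a short component-chasing argument (for definable extreme amenability). All $N_i$ are $K$-definable subgroups with $K$-definable quotients, so the hypotheses of Proposition \ref{prop:dfg extension} apply throughout.

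The base case is $\Ga$. By Lemma \ref{lem:additive is dea}, $\GaT = \Ga$, so every invariant global type is stabilized by $\GaT$, hence f-generic. Since all types over $K$ are definable, the global heir of any type in $S_{\Ga}(K)$ is a global definable f-generic, so $\Ga$ has dfg; it is moreover definably extremely amenable by the same lemma.

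For the inductive step, assume $N_i$ has dfg and satisfies $N_i^{00} = N_i$. The extension
\[\{1\} \to N_i \to N_{i+1} \to N_{i+1}/N_i \to \{1\}\]
has $N_{i+1}/N_i$ $K$-definably isomorphic to $\Ga$, hence dfg by the base case. As a subgroup of the unipotent group $U$, the group $N_{i+1}$ is nilpotent, in particular solvable, so definably amenable. Proposition \ref{prop:dfg extension} then gives that $N_{i+1}$ has dfg. For the $00$-component: $N_{i+1}^{00} \cap N_i$ is a type-definable bounded-index subgroup of $N_i$, so contains $N_i^{00} = N_i$; thus $N_i \le N_{i+1}^{00}$, and the quotient $N_{i+1}^{00}/N_i$ is a type-definable bounded-index subgroup of $N_{i+1}/N_i$, hence contains $(N_{i+1}/N_i)^{00} = N_{i+1}/N_i$. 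Therefore $N_{i+1}^{00} = N_{i+1}$.

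After $n$ iterations we conclude that $U$ has dfg with $U^{00} = U$, i.e.\ $U$ is definably extremely amenable. I do not foresee any substantial obstacle: the nontrivial inputs are Proposition \ref{prop:dfg extension} (already established) and the quotient argument for $00$-components (standard), while the decomposition into $\Ga$-quotients is the structural fact stated before the corollary.
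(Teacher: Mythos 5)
Your overall strategy --- induction along the normal series with $\Ga$-quotients, Proposition \ref{prop:dfg extension} for the \emph{dfg} part, and the component-chasing for the $00$-components --- is exactly the paper's route (the paper leaves both the base case and the extreme-amenability bookkeeping implicit), and your inductive step and your argument that $N_{i+1}^{00}=N_{i+1}$ are correct. The genuine gap is in the base case. The claim ``every invariant global type is stabilized by $\GaT$, hence f-generic'' (and its consequence that the global heir of \emph{any} type in $S_{\Ga}(K)$ is a definable f-generic) is false. Since $\GaT=\Ga$, being an additive f-generic means having stabilizer equal to all of $\Ga$, i.e.\ being a \emph{fixed point} of the translation action; the equality $\GaT=\Ga$ makes f-genericity the most restrictive condition possible, not an automatic one, and invariance over a small model does not imply it. Concretely, take $K=\Qp$ and let $r\in S(K)$ be the type of a nonzero infinitesimal (containing $v(x)>n$ for all $n\in\mathbb{Z}$). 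Its global heir $q$ asserts $v(x)>\gamma$ for every $\gamma$ in the value group of $\CC$; for $c\in\CC$ with $v(c)$ in that value group one has $v(c+x)=v(c)$ for $x\models q$, so $c+q\neq q$, and indeed $\Stab(q)$ has unbounded index. Thus $q$ is $K$-definable, hence $K$-invariant, but not additively f-generic.

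What is actually needed, and does work, is the following. By Fact \ref{fact:cs fundamental}(i), $\Ga$ admits a strongly f-generic type $p'$, and since $\GaT=\Ga$ (Lemma \ref{lem:additive is dea}) it satisfies $c+p'=p'$ for all $c\in\CC$. Let $r=p'\restriction K$ and let $q=r|\CC$ be its global heir, which is $K$-definable. For each formula $\phi$, the set of $c\in\CC$ such that $c+q$ and $q$ contain exactly the same instances of $\phi$ is $K$-definable (it is computed from the definition scheme of $q$), and it contains every $c\in K$, because $c+r=r$ for $c\in K$ and the heir commutes with $K$-definable bijections. A $K$-definable subset of $\CC$ containing all of $K$ equals $\CC$ by elementarity, so $c+q=q$ for every $c\in\CC$ and $q$ is the required definable f-generic. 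With this base case repaired, the rest of your induction goes through as written.
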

Recall now that a \emph{triagonizable} group is definably isomorphic to $U \ltimes \mathbb{G}^n_m$ for some unipotent algebraic group $U$. If the characteristic of the field is $0$, then $U$ is connected. Hence:
\begin{corollary}
	Let $K$ be a dp-minimal field of characteristic $0$ with all types over $K$ definable. Then every triagonizable group $G$ over $K$ has \emph{dfg}.
\end{corollary}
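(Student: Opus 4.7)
The plan is to realize $G$ as an extension with unipotent kernel and torus quotient, and then apply Proposition \ref{prop:dfg extension} twice (once to build up the torus, once to glue on the unipotent part). By definition of triagonalizability, $G = U \ltimes \Gm^n$ for some unipotent $K$-algebraic group $U$, where conventionally $U$ is the unipotent radical and hence normal in $G$. Since $\charf K = 0$, the group $U$ is connected, so by the previous corollary $U$ is \emph{dfg} (in fact definably extremely amenable). Thus $G$ sits in a short exact sequence $1 \to U \to G \to \Gm^n \to 1$ of $K$-definable groups.

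Next I would verify that $\Gm^n$ is \emph{dfg} by induction on $n$. The base case $n = 1$ is precisely Theorem \ref{thm:dp minimal field}. For the inductive step, $\Gm^n$ is abelian (hence definably amenable), and the first factor $\Gm$ is a $K$-definable normal subgroup whose quotient is isomorphic to $\Gm^{n-1}$; both are \emph{dfg} by the base case and the inductive hypothesis respectively, so Proposition \ref{prop:dfg extension} applies and the induction goes through.

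Finally, $G$ itself is solvable (being triagonalizable), hence definably amenable; we have just shown that both $U \lhd G$ and $G/U \cong \Gm^n$ are \emph{dfg}; everything is $K$-definable; and the standing assumption ensures that all types over $K$ are definable. A single further application of Proposition \ref{prop:dfg extension} then yields that $G$ has \emph{dfg}. There is no serious obstacle in this plan; the only point worth a brief check is that the decomposition $G = U \ltimes \Gm^n$ is genuinely $K$-definable with $U$ normal, which is standard in the theory of linear algebraic groups in characteristic zero.
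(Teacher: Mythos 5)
Your proof is correct and follows essentially the same route as the paper: the paper likewise decomposes $G$ as a semidirect product of a connected unipotent group $U$ (which is \emph{dfg} by the preceding corollary, using $\charf K = 0$ for connectedness) with the torus $\Gm^n$, gets \emph{dfg} for $\Gm^n$ from Theorem \ref{thm:dp minimal field} together with Proposition \ref{prop:dfg extension} by induction, and then applies Proposition \ref{prop:dfg extension} once more to conclude. The only difference is that you spell out the induction on $n$ and the definable-amenability checks, which the paper leaves implicit.
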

This extends Corollary 2.8 of \cite{YaoTri}.

	\bibliographystyle{plain}
	\bibliography{linear}
\end{document}